\documentclass[11pt,a4paper]{amsart}
\usepackage{latexsym}
\usepackage{graphicx}
\usepackage{subfig}
\usepackage{caption}
\usepackage{float}
\usepackage{enumerate}
\usepackage[top=3.2cm,bottom=3.8cm,left=3cm,right=2cm]{geometry}
\usepackage{mathrsfs}
\usepackage{amssymb}
\usepackage{amsbsy}
\usepackage[colorlinks,linkcolor=blue,citecolor=blue,pagebackref]{hyperref}

\newtheorem{theorem}{Theorem}[section]
\newtheorem{lemma}[theorem]{Lemma}

\theoremstyle{definition}

\theoremstyle{remark}
\newtheorem{remark}[theorem]{Remark}

\numberwithin{equation}{section}

\begin{document}
	\begin{center}
		{\large\bf The Brunn-Minkowski inequality for the generalized Gaussian distribution}
	\end{center}

	\vskip 15pt
\begin{center}
	{\small\bf  Ge\ Xiong\ \  \ \ \ \ Kai-Wen\ Yang}\\~~ \\
	\small{School of Mathematical Sciences, Key Laboratory of Intelligent Computing and Applications (Ministry of Education), Tongji University, Shanghai, 200092, China}
\end{center}

\vskip 5pt
\footnotetext{E-mail addresses: 1. xiongge@tongji.edu.cn;\ 2. yangkaiwen@tongji.edu.cn.}
\footnotetext{Research of the authors was supported by NSFC No. 12271407.}
	
	\begin{center}
		\begin{minipage}{14cm}
			{{\bf Abstract:} 
				Let $\mu_p$ be the generalized Gaussian distribution on $\mathbb{R}^n$ with density $e^{-\frac{|x|^p}{p}}$ multiplied by a constant depending on $p\ge 1$ and $n$, and $\alpha_p(n)$ be the largest number such that the Brunn-Minkowski type inequality
				$$
				\mu_p(\lambda K+(1-\lambda) L)^{\alpha_p(n)} \geq \lambda \mu_p(K)^{\alpha_p(n)}+(1-\lambda) \mu_p(L)^{\alpha_p(n)}
				$$ holds for all convex bodies $K,L$ in $\mathbb{R}^n$ containing the origin and $\lambda\in[0,1]$. 
                In this paper, the new lower and upper bounds for $\alpha_p(n)$ are found, and their asymptotically optimality as $n\to +\infty$ is proved.}
			
			\vskip 5pt{{\bf 2020 Mathematics Subject Classification:} 52A40, 60E15.}
			
			\vskip 5pt{{\bf Keywords:} Generalized Gaussian distribution; Brunn-Minkowski inequality; log-concavity; convex bodies.}
		\end{minipage}
	\end{center}

	\vskip 20pt
	\section{\bf Introduction}
	\vskip 5pt
	
	
	The setting for this paper is the $n$-dimensional Euclidean space, $\mathbb{R}^n$. Let $\mu$ be a finite Borel measure on $\mathbb{R}^n$. It is said that $\mu$ satisfies the \emph{Brunn-Minkowski inequality} with exponent $\alpha$, if for all Borel sets $K, L \subseteq \mathbb{R}^n$  and  $\lambda \in [0,1]$,
	\begin{equation*}
		\mu(\lambda K+(1-\lambda) L)^\alpha \geq \lambda \mu(K)^\alpha+(1-\lambda) \mu(L)^\alpha.
	\end{equation*}
	Here, $\lambda K+(1-\lambda) L=\left\{\lambda x+(1-\lambda) y: x \in K, y \in L\right\}$ is the \emph{Minkowski combination} of $K$ and $L$. 
	By the Hölder inequality, if $\alpha_1>\alpha_2$ and the inequality holds for $\alpha_1$, then it holds for $\alpha_2$. It suggests that the inequality becomes \emph{stronger} as $\alpha$ increases. 
    
    The Brunn-Minkowski inequality has connections with many other fundamental inequalities, such as the isoperimetric inequality, the Prékopa-Leindler inequality, the Sobolev inequality and the Brascamp-Lieb inequality, etc. See, e.g., Barthe \cite{Barthe} and Bobkov and Ledoux \cite{Bobkov1,Bobkov2} for details. 
	
	The classical Brunn–Minkowski inequality reads: If $K, L \subseteq \mathbb{R}^n$ are Borel sets and $\lambda \in [0,1]$, then for the $n$-dimensional Lebesgue measure $V_n$,
	$$
	V_n(\lambda K+(1-\lambda) L)^\frac{1}{n} \geq \lambda V_n(K)^\frac{1}{n}+ (1-\lambda) V_n(L)^\frac{1}{n}.
	$$
    It is not only one of the cornerstones in convex geometry, but also a powerful tool in many areas such as probability and statistics,  information theory and physics. Please refer to the excellent survey by Gardner \cite{gar1}.
	
	Let $\mu$ be a finite Borel measure on $\mathbb{R}^n$ with density $f$ with respect to the Lebesgue measure on $\mathbb{R}^n$, and $\alpha\in [-\infty,\frac{1}{n}]$. In the 1970s, Borell \cite{borell2, borell} proved that  $\mu$ satisfies the Brunn-Minkowski inequality with exponent $\alpha$, if and only if for all $x, y \in \mathbb{R}^n$ with $f(x), f(y)>0$ and  all $\lambda\in[0,1]$,
	$$
	f(\lambda x+(1-\lambda) y)^\beta \geq \lambda f(x)^\beta+(1-\lambda) f(y)^\beta,
	$$
	where $\frac{1}{\alpha}=\frac{1}{\beta}+n$. The cases where $\alpha$ or $\beta$ equals to $-\infty$, $0$ or $+\infty$ are interpreted in the limiting sense.
	Specifically,  if $\alpha=\beta=0$, then the Borell theorem reduces to the celebrated Prékopa theorem \cite{AP}: Let $\mu$ be a Borel measure on $\mathbb{R}^n$ with density $f$ with respect to the Lebesgue measure on $\mathbb{R}^n$. Then $\mu$ is \emph{log-concave}, i.e., for all Borel sets $K$, $L$ and  $\lambda\in [0,1]$,
	$$
	\mu(\lambda K+(1-\lambda) L) \geq \mu(K)^\lambda \mu(L)^{1-\lambda}
	$$
	if and only if its density $f$ is of the form $e^{-V}$, where the potential $V:\mathbb{R}^n\to \mathbb{R}$ is convex.
	
	The Borell theorem accurately quantifies the  exponent  in the Brunn-Minkowski inequality for measure $\mu$ over \emph{all} Borel sets. It is remarkable that  if  the measure $\mu$ is defined on  sets with more \emph{geometric flavors}, for example, the class of  origin-symmetric convex bodies in $\mathbb{R}^n$, the class of convex bodies containing the origin in $\mathbb{R}^n$, or the class of star bodies in $\mathbb{R}^n$, a possible ``better" exponent may exist. 
	
	Recently, that looking for a better exponent  in the Brunn-Minkowski inequality on some special classes has become a central topic in the Brunn–Minkowski theory. Several fascinating results and conjectures are appeared. See, e.g., \cite{ Aishwarya,rotem,Rotem,Moschidis,Livshyts,milman1,Livshyts2,Z2,zhang1,yang}. Moreover, this topic is connected \cite{milman3,Z2,Saroglou2,Saroglou} to the longstanding log-Brunn-Minkowski conjecture, which is posed by Böröczky, Lutwak, Yang and Zhang \cite{log} and states that the  volume of the geometric mean of two symmetric convex sets is bounded below by the geometric mean of their volumes.
    
    Let $\mu$ be a measure on $\mathbb{R}^n$ with $d\mu=e^{-V}dx$ and that $V:\mathbb{R}^n\to \mathbb{R}$ is convex and even.
	Livshyts, Marsiglietti, Nayar and Zvavitch \cite[Proposition 1]{Z2} showed that if the log-Brunn-Minkowski conjecture is confirmed, then for all origin-symmetric convex bodies $K, L$ in $\mathbb{R}^n$ and $\lambda \in[0,1]$, 
	\begin{equation}\label{BMeven}
		\mu(\lambda K+(1-\lambda) L)^{\frac{1}{n}} \geq \lambda \mu(K)^{\frac{1}{n}}+(1-\lambda) \mu(L)^{\frac{1}{n}} .
	\end{equation}
	In 2023, Cordero-Erausquin and Rotem \cite{Rotem} proved that \eqref{BMeven} is true if in addition  the measure $\mu$ is rotationally invariant. In \cite{Livshyts2}, Livshyts proved \eqref{BMeven} type inequality with exponent $\frac{1}{n^{4+o(1)}}$.
	
	\vskip 3pt
	
	In 2010, Gardner and Zvavitch \cite{Gardner} asked whether the standard Gaussian distribution $\gamma_n$, i.e., the measure with density $\frac{d \gamma_n}{d x}=\frac{1}{(2 \pi)^{n / 2}} e^{-\frac{|x|^2}{2}}$, satisfies the Brunn-Minkowski inequality with exponent $\frac{1}{n}$ on the class of convex bodies containing the origin? 
	In 2013, a \emph{counterexample} to this problem was constructed by Nayar and Tkocz  \cite{Nayar}.

	In 2021,  Kolesnikov and Livshyts \cite{Livshyts} proved that if $K$ and $L$ are  convex bodies \emph{containing the origin} in $\mathbb{R}^n$ and  $\lambda \in[0,1]$, then
	$$
	\gamma_n((\lambda K+(1-\lambda) L)^{\frac{1}{2n}} \geq \lambda \gamma_n(K)^{\frac{1}{2n}}+(1-\lambda) \gamma_n(L)^{\frac{1}{2n}} .$$ 
    Later, Eskenazis and Moschidis \cite{Moschidis} settled the Gardner-Zvavitch problem when  $K$ and $L$ are \emph{origin-symmetric} convex bodies: If $K$ and $L$ are origin-symmetric convex bodies in $\mathbb{R}^n$ and  $\lambda \in[0,1]$, then
	\begin{equation}\label{gaussSys}
		\gamma_n(\lambda K+(1-\lambda) L)^{\frac{1}{n}} \geq \lambda \gamma_n(K)^{\frac{1}{n}}+(1-\lambda) \gamma_n(L)^{\frac{1}{n}} .
	\end{equation}
	In 2025, Aishwarya and Li \cite{Aishwarya}  gave an alternative proof of \eqref{gaussSys} by the optimal transport method, and established a corresponding entropy power inequality. Till now, the largest exponent for $\gamma_n$ on the class of convex bodies containing the origin  remains \emph{unknown}.
	
	In 2026, Aishwarya and Rotem \cite{rotem} proved that for all \emph{star bodies} $K$, $L$ in $\mathbb{R}^n$ and  $\lambda \in[0,1]$,
	\begin{equation}\label{pqici}
		\mu(\lambda K+(1-\lambda) L)^{\frac{p-1}{pn}} \geq \lambda \mu(K)^{\frac{p-1}{pn}}+(1-\lambda) \mu(L)^{\frac{p-1}{pn}},
	\end{equation} where $\mu$ is a log-concave measure on $\mathbb{R}^n$ with density $e^{-V}$ and $V$ is $p$-homogeneous, $p\in (1,+\infty)$.  A function $V:\mathbb{R}^n\to \mathbb{R}$ is \emph{$p$-homogeneous}, if $V(\lambda x)=\lambda^p V(x)$ for all $x \in \mathbb{R}^n$ and  $\lambda>0$.

    \vskip 3pt
	
	In this article, we focus on establishing the Brunn-Minkowski inequality for the \emph{generalized Gaussian distribution} $\mu_p$ on $\mathbb{R}^n$, $p\ge 1$,   with
	$$
	d\mu_p=\frac{\Gamma(\frac{n}{2})}{2 \pi^{\frac{n}{2}} p^{\frac{n}{p}-1} \Gamma(\frac{n}{p})}e^{-\frac{|x|^p}{p}}dx
	$$
    over the class of convex bodies containing the origin. 
    
	The generalized Gaussian distribution $\mu_p$ is a natural generalization of the standard Gauss distribution $\gamma_n$ with $d \gamma_n=\frac{1}{(2 \pi)^{n / 2}} e^{-\frac{|x|^2}{2}}dx$, and connects several important distributions. If $p=2$, it is precisely the standard Gaussian measure; If $p=1$, it is closely related to the multivariate Laplace distribution; If $p\to +\infty$, $\mu_p$ tends to the uniform distribution on the Euclidean unit ball $B^n$ in $\mathbb{R}^{n}$. Throughout this article, assume $n\ge 2$  and  $p\ge 1$.

	Denote the \emph{upper incomplete gamma function} as  $\Gamma(s, x) = \int_{x}^{+\infty} t^{s-1} e^{-t} dt$, $s\in \mathbb{R}$, $x> 0$. Please refer to, e.g.,  \cite[6.5.3]{handbook} on this function.  Our main result is the following.
	
	\begin{theorem}\label{main}
		If $\mu_p$ is the generalized Gaussian distribution on $\mathbb{R}^n$ and $\alpha_p(n)$ is the largest number such that the inequality
		$$
		\mu_p(\lambda K+(1-\lambda) L)^{\alpha_p(n)} \geq \lambda \mu_p(K)^{\alpha_p(n)}+(1-\lambda) \mu_p(L)^{\alpha_p(n)} 
		$$ holds for all convex bodies $K,L$ in $\mathbb{R}^n$ containing the origin and   $\lambda\in[0,1]$, then
		$$
		\frac{1}{n}e^{\frac{(p-1)n}{p}}( \frac{(p-1)n}{p} )^{\frac{n}{p}} \Gamma(1-\frac{n}{p}, \frac{(p-1)n}{p}) \le \alpha_p(n)\le 1 - \frac{p}{n-1} \frac{\Gamma(\frac{n+p-2}{p}) \Gamma(\frac{n}{p})}{\Gamma(\frac{n-1}{p})^2}.
		$$
	\end{theorem}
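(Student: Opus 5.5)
The plan is to prove the two bounds by separate arguments: a general criterion for the lower bound, and an explicit family of test bodies for the upper bound.

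\textbf{Lower bound.} I would use the Kolesnikov--Livshyts approach. For a log-concave measure $d\mu=e^{-V}dx$ and convex bodies containing the origin, the Brunn--Minkowski inequality with exponent $\alpha$ reduces, by the standard local (second-variation) argument, to a Poincaré-type inequality on the boundary. Concretely, one checks concavity of $t\mapsto\mu(K+tf)^{\alpha}$ for support-function perturbations.

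A cleaner route is to pass through the radial quantity. Writing $\mu_p(K)=\int_{S^{n-1}}\int_0^{\rho_K(\theta)}c\,r^{n-1}e^{-r^p/p}\,dr\,d\theta$, the deviation from $\frac1n$-concavity is governed by
\[
F(R)=\int_0^R r^{n-1}e^{-r^p/p}\,dr,
\qquad
\frac{R^{n}e^{-R^p/p}}{n\,F(R)} .
\]
The key step is to show that the Kolesnikov--Livshyts criterion gives an admissible exponent
\[
\alpha\;\ge\;\inf_{R}\frac{1}{n}\cdot(\text{ratio involving }F).
\]
After the substitution $t=r^p/p$, this infimum should be exactly $\frac1n e^{s}s^{n/p}\Gamma(1-\frac np,s)$ with $s=\frac{(p-1)n}{p}$. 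The value of $s$ is where the potential term $x\cdot\nabla V=|x|^p$ balances $(p-1)n/p$; this matches the $p$-homogeneity heuristic of \eqref{pqici}.

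So I would:
\begin{enumerate}
\item[(i)] Derive the local inequality
\[
\alpha\,\frac{(\int h\,d\mu_{\partial K})^2}{\mu(K)}\le \int \bigl(\cdots\bigr)
\]
via the Kolesnikov--Milman second-variation formula.
\item[(ii)] Bound the right side using $x\cdot\nabla V\le$ an $R$-dependent quantity together with the pointwise control on $\langle x,\nu\rangle$ available for bodies containing the origin.
\item[(iii)] Optimize the resulting one-variable expression, identifying the minimizer through an incomplete gamma integration by parts.
\end{enumerate}
Step (ii) is the main obstacle. One must decide how the term $\int (x\cdot\nabla V)\,d\mu$ is controlled for non-symmetric bodies. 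I would first try the bound $\mu(K)\ge\mu(\text{cone})$, together with $V$ being $p$-homogeneous, so that $\int_K \operatorname{div}(x e^{-V})\,dx$ produces $n\mu(K)-\int_K|x|^p\,d\mu$.

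\textbf{Upper bound.} This needs a test configuration. Since the Gamma ratio $\frac{\Gamma(\frac{n+p-2}{p})\Gamma(\frac np)}{\Gamma(\frac{n-1}{p})^2}$ involves $n-1$ dimensional moments, I would take half-spaces or slabs through the origin: $K=\{x_1\ge -a\}$ intersected with a large ball, perturbed as $K_t=\{x_1\ge -a+t\}$, or equivalently a family of translates. One then computes the second derivative of $t\mapsto\mu_p(K_t)^\alpha$ at the degenerate point. Most natural is to take $L$ to be a half-space $\{x_1\ge0\}$ tilted, with $K=\{x\cdot\theta\ge0\}$.

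Necessity of concavity forces
\[
\alpha\le 1-\frac{\mu\,\mu''}{(\mu')^2}.
\]
For the half-space family $H_s=\{x_1\ge -s\}$ at $s=0$, the derivatives are
\[
\mu'=\int_{\mathbb{R}^{n-1}}e^{-|y|^p/p}\,dy,
\qquad
\mu''=0 \text{ (by symmetry)}.
\]
So instead I would rotate half-spaces through the origin, or use wedges. This yields marginal integrals $\int_{\mathbb{R}^{n-1}}|y|^{k}e^{-|y|^p/p}\,dy$. Evaluated in polar coordinates, these are Gamma functions such as $p^{(n-1+k)/p-1}\Gamma(\frac{n-1+k}{p})$, and the exact constants $\frac{p}{n-1}$, $\Gamma(\frac{n+p-2}{p})$ arise from moments of order $p-1$ in dimension $n-1$.

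Finally, asymptotic optimality follows from Stirling-type expansions showing that both bounds are $\sim c_p/n$ with the same constant. The chosen test family (likely a wedge with vertex at the origin, degenerating) is the part I would need to experiment with.
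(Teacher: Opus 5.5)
Both halves of your proposal have genuine gaps. For the lower bound, you never give a mechanism that produces the stated constant, and the steps you do propose cannot produce it. The missing ingredient is a pointwise bound on the Kolesnikov--Livshyts integrand (Theorem~\ref{KL}). With $\Delta u-\langle\nabla V,\nabla u\rangle=1$, put $a=\langle\nabla V,\nabla u\rangle$ and $Q=\langle(\nabla^2V)^{-1}\nabla V,\nabla V\rangle$. Then $\|\nabla^2u\|^2\ge\frac1n(1+a)^2$, and Cauchy--Schwarz in the metric $\nabla^2V$ gives $\langle\nabla^2V\nabla u,\nabla u\rangle\ge a^2/Q$. Minimizing $\frac1n(1+a)^2+a^2/Q$ over $a$ gives $\frac{1}{n+Q}$. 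For $V=|x|^p/p$ one computes $Q=|x|^p/(p-1)$, so $\alpha_p(n)\ge\inf_K\frac{1}{\mu(K)}\int_K\frac{d\mu}{n+|x|^p/(p-1)}$.

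Since $K\ni o$ is star-shaped (this is where the origin hypothesis enters), polar coordinates reduce this to averages of a radially decreasing function over $[0,\rho_K(\theta)]$. These averages decrease in the endpoint, so the infimum is the full-ray average. There is no interior minimizer in $R$. The incomplete gamma function then arises from $n+\frac{p}{p-1}u=\frac{p}{p-1}(s+u)$, with $u=|x|^p/p$ and $s=\frac{(p-1)n}{p}$, together with $\frac{1}{s+u}=\int_0^\infty e^{-(s+u)\tau}\,d\tau$ and Fubini. The case $p=1$, where $\nabla^2V$ is degenerate, needs a limit $p\to1^+$.

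By contrast, the only concrete quantity you write down, $\frac{R^ne^{-R^p/p}}{nF(R)}$, tends to $0$ as $R\to\infty$, so its infimum is useless. Your step (ii) is also too lossy. The divergence identity yields only the moment bound $\int_K|x|^p\,d\mu\le n\mu(K)$. Even combined with the correct weight above, Jensen then gives just $\frac{1}{n+n/(p-1)}=\frac{p-1}{pn}$, which is strictly below the claimed constant (Remark~\ref{re2}).

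For the upper bound, your necessary condition $q\le 1-\mu\mu''/(\mu')^2$ along a one-parameter family is the right one, and you are right that half-space translates give nothing. But the test family is left open and the constants are only guessed. What is needed is a thin \emph{circular} cone with apex at the origin, $A=\{x_n\ge|x'|\tan\alpha\}$, and its translate $B=A-\varepsilon e_n$. This translate still contains $o$ and satisfies $\lambda A+(1-\lambda)B=A-(1-\lambda)\varepsilon e_n$, so your criterion applies to $t\mapsto\mu(A-te_n)$ at $t=0$.

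Let the aperture $\beta=\frac\pi2-\alpha\to0$. Up to the common factor $(n-1)\omega_{n-1}$, one gets $\mu\sim\frac{\beta^{n-1}}{n-1}c_0$, $\mu'\sim\beta^{n-1}c_1$ and $\mu''\sim\beta^{n-1}c_2$, where
$c_0=\int_0^\infty t^{n-1}e^{-t^p/p}dt$, $c_1=\int_0^\infty t^{n-2}e^{-t^p/p}dt$ and $c_2=\int_0^\infty t^{n+p-3}e^{-t^p/p}dt$.
The expression for $\mu$ is the spherical cap area times the $n$-dimensional radial integral. Hence $\alpha_p(n)\le 1-\frac{c_0c_2}{(n-1)c_1^2}$, which is exactly the stated bound. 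Truncating by $B^n(R)$ with $R\to\infty$ then produces convex bodies.

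The factor $\frac{1}{n-1}$ and the $n$-dimensional integral $c_0$ are essential; they do not come from ``moments in dimension $n-1$'' alone. If by ``wedge'' you mean a dihedral one, $\{x_n\ge|x_1|\cot\beta\}$, the same computation gives $1-(\mathbb{E}|\theta_1|)^2\frac{c_0c_2}{c_1^2}$ with $\theta$ uniform on $\mathbb{S}^{n-2}$. This is strictly weaker for $n\ge3$, because $(\mathbb{E}|\theta_1|)^2<\frac{1}{n-1}$.
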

	
	It is interesting that if $p=2$,  Theorem \ref{main} gives that
	$$
	\frac{1}{n}e^{\frac{n}{2}} ( \frac{n}{2} )^{\frac{n}{2}} \Gamma(1-\frac{n}{2}, \frac{n}{2}) \le \alpha_2(n)\le 1 - \frac{2}{n-1} \frac{ \Gamma(\frac{n}{2})^2}{\Gamma(\frac{n-1}{2})^2}.
	$$
	If in addition $n=2,3,4,\ldots$, a direct numerical computation yields that $$\alpha_2(2)\in [0.298, 0.363],\ \ \alpha_2(3)\in [0.189, 0.215],\ \ \alpha_2(4)\in [0.138, 0.152],\ldots.$$
	In \cite{Nayar}, Nayar and Tkocz showed that $\alpha_2(2)<0.5$. In fact, using their arguments, one can obtain $\alpha_2(2)\le 1-\frac{2}{\pi}\approx 0.363$.
	In \cite[Remark 6.7]{Livshyts},  Kolesnikov and Livshyts obtained $\alpha_2(2)\ge 0.25$, and they wish to push the lower bound to $0.298$. See Remark \ref{re3} for details. 
	
	If $p=1$ and $n\ge 2$, the lower bound in Theorem \ref{main} is interpreted in the limiting sense and equals zero. Indeed, since
 $$\Gamma(1-\frac{n}{p}, \frac{(p-1) n}{p})=\int_{\frac{(p-1) n}{p}}^{+\infty} t^{-\frac{n}{p}} e^{-t} d t \leq \int_{\frac{(p-1) n}{p}}^{+\infty} t^{-\frac{n}{p}} d t=\frac{(\frac{(p-1) n}{p})^{1-\frac{n}{p}}}{\frac{n}{p}-1},$$ 
it follows that
 $$0\le \lim_{p\to 1^+}\frac{1}{n}e^{\frac{(p-1)n}{p}}( \frac{(p-1)n}{p} )^{\frac{n}{p}} \Gamma(1-\frac{n}{p}, \frac{(p-1)n}{p})\le \lim_{p\to 1^+}\frac{1}{n} e^{\frac{(p-1) n}{p}} \frac{\frac{(p-1) n}{p}}{\frac{n}{p}-1}=0.$$ 
 Combining the upper bound in Theorem \ref{main} with $p=1$, it immediately gives that $\alpha_1(n)\equiv 0$. In contrast, on the class of \emph{origin-symmetric} convex bodies,  Livshyts \cite{Livshyts2} established \eqref{BMeven} type inequality with  exponent $\frac{1}{n^{4+o(1)}}>0$ for even log-concave measures.

 It is pointed out that for fixed  $p\ge 1$,  our bounds are \emph{asymptotically optimal} in the following sense: as \(n\to +\infty\), the lower and upper bounds are both equal to \(\frac{p-1}{np} + o\!\left(\frac{1}{n}\right)\). For given $n$, as $p\to +\infty$, the lower and upper bounds both go to $\frac{1}{n}$, which recovers the classical Brunn-Minkowski inequality for convex bodies. One can refer to Sections \ref{sub4.1} and \ref{sub4.2} for details.

	In \cite{rotem}, the authors proved that on the class of \emph{star bodies} the exponent $\alpha\ge \frac{p-1}{pn}$, if  the measure $\mu$ on $\mathbb{R}^n$ involved in the Brunn-Minkowski inequality satisfies that $d\mu=e^{-V}dx$ and  $V:\mathbb{R}^n\to \mathbb{R}$ is a $p$-homogeneous convex function, $p\in (1,+\infty)$.   It is interesting that if restricted on the class of \emph{convex bodies containing the origin}, Theorem \ref{qici} tells us that for such measure $\mu$, 
	$$
	\alpha\ge \frac{1}{n}e^{\frac{(p-1)n}{p}} ( \frac{(p-1)n}{p} )^{\frac{n}{p}} \Gamma(1-\frac{n}{p}, \frac{(p-1)n}{p})>\frac{p-1}{pn}.$$
     See Remark \ref{re2} for details. However, since our approach  tightly relies on the injectivity of Gauss mapping, we have to admit that the \emph{convexity} assumption cannot be removed.
	
	To prove the lower bound in Theorem \ref{main}, we follow the route opened by Kolesnikov and Livshyts \cite{Livshyts}, and prove a new estimate (Lemma \ref{gen}) for the exponent involved in the Brunn-Minkowski inequality. Using this estimate, we develop an analysis technique to attack the  task  by virtue of of  \emph{radial functions}. See Lemma \ref{origin} for details. Inspired by the counterexample provided by Nayar and Tkocz \cite{Nayar}, we prove the upper bound in Theorem \ref{main}.
	
	This article is organized as follows. In Section 2,  some basic facts are introduced. Theorem \ref{main} is proved in Section 3. The  asymptotic for the bounds as $n\to +\infty$ or $p\to +\infty$ are provided in Section 4. In Section 5, we extend the lower bound of generalized Gaussian distribution in Theorem \ref{main} to measures with $p$-homogeneous convex potentials.  Finally,  our bounds and the known bounds are illustrated by using Python 3.14.0.

	\vskip 20pt
	\section{\bf Preliminaries}
	\vskip 5pt
	
	For quick later reference, we collect some basic facts here. Good references on convex bodies are the books by Gardner \cite{Gardner1} and Schneider \cite{Sch}.
	
	As usual, let $\vert x \vert$ be the standard Euclidean norm of $x$  and $\left\langle x, y \right\rangle$  be the
	standard inner product of $x$ and $y$ in $\mathbb{R}^n$.  Let $\mathbb{S}^{n-1}=\{u\in \mathbb{R}^{n}:|u|=1\}$
	be the boundary of the Euclidean unit ball $B^n$.  The volume of $B^n$ is $\omega_n=\frac{\pi^{\frac{n}{2}}}{\Gamma(\frac{n}{2}+1)}$.
	Write  $B^n(R)$ for the centered ball with radius $R$. 
	
	A \emph{convex body} is a compact convex body that has a nonempty interior. Write \(\mathcal{K}^n_o\) and \(\mathcal{K}^n_{os}\) for the class of convex bodies containing  the origin and origin-symmetric convex bodies in $\mathbb{R}^n$, respectively.
	 A set $K\subset \mathbb{R}^n$ is a star body, if it is a Borel set satisfying $\lambda x\in K$ for every $x\in K$ and $\lambda\in [0,1]$. The \emph{radial function} $\rho_K$ of a star body \( K \) is defined by
	$$
	\rho_K(u) = \sup\{\lambda \geq 0 : \lambda x \in K\},\quad u\in\mathbb{S}^{n-1}.
	$$
	
	Let $V:\mathbb{R}^n\to \mathbb{R}$ be a convex function. Then $V$ is locally Lipschitzian, $\nabla V$ is continuous almost everywhere, and $\nabla^2 V$  exists and is positive semi-definite almost everywhere. If $V$ is strictly convex, then $\nabla V$ is monotone  on its domain. However, one cannot still expect $\nabla^2 V$ to be positive definite in this case. A classic example is the following: enumerate all rational numbers in the interval $[0,1]$ as $r_1, r_2, r_3, \ldots$, and define
	$$
	g(x)=\sum_{n=1}^{+\infty} \frac{1}{2^n} \cdot 1_{\left[r_n, 1\right]}(x),\quad V(x)=\int_0^x g(t) d t.
	$$
	Then $V(x)$ is a strict convex function on $[0,1]$, but $V''(x)=0$ almost everywhere. 

    Let $A$ be an invertible $n\times n$ matrix and  $u,v
\in
\mathbb{R}^n$ such that $1+v^{T}A^{-1}u\neq 0$. The \emph{Sherman--Morrison} formula states that
    \[(A+uv^{T})^{-1}=A^{-1}-\frac{A^{-1}uv^{T}A^{-1}}{1+v^{T}A^{-1}u}.\]

	The following theorem shown by Kolesnikov and Livshyts \cite[Lemma 2.3]{Livshyts} is needed.
	
	\begin{theorem}[\cite{Livshyts}]\label{KL}
		Given a family $\mathcal{K}$ of convex bodies in $\mathbb{R}^n$ that is closed under Minkowski addition. If $\mu$ is a Borel measure on $\mathbb{R}^n$ with density $e^{-V}$ and that $V:\mathbb{R}^n\to \mathbb{R}$ is convex, and $\alpha$ is the largest number such that the inequality
		$$
		\mu(\lambda K+(1-\lambda) L)^{\alpha} \geq \lambda \mu(K)^{\alpha}+(1-\lambda) \mu(L)^{\alpha}
		$$ holds for all convex bodies $K,L\in \mathcal{K}$ and $\lambda\in[0,1]$, then
		\begin{equation}
			\alpha\ge \inf_{K \in \mathcal{K}}\frac{1}{\mu(K)}\int_K\big(\left\|\nabla^2 u\right\|^2+\left\langle\nabla^2 V \nabla u, \nabla u\right\rangle\big) d \mu,
		\end{equation}
		where $u$ is a $C^2$ function on $K$ with $\Delta u-\left\langle \nabla V, \nabla u\right\rangle=1_K$.
	\end{theorem}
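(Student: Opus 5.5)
We plan to prove Theorem \ref{KL} by the local (infinitesimal) approach: show that $t\mapsto \mu(K_t)^{\alpha_0}$ is concave along Minkowski interpolations, where $\alpha_0$ denotes the infimum on the right-hand side. To reduce to this, fix $K,L\in\mathcal{K}$ and put $K_t=(1-t)K+tL$. Then $h_{K_t}=(1-t)h_K+th_L$ is affine in $t$, and $K_t\in\mathcal{K}$ for every $t\in[0,1]$ by the closedness assumption. If $\mu(K_t)\le 0$ never occurs (it cannot, since $K_t$ has interior), concavity of $F(t)=\mu(K_t)^{\alpha_0}$ on $[0,1]$ gives the Brunn--Minkowski inequality with exponent $\alpha_0$, hence $\alpha\ge\alpha_0$. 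By approximation, with $\mu$ and $\alpha_0$ continuous under Hausdorff limits, we may assume that $K,L$ are $C^\infty$ with strictly positive Gauss curvature and that $V$ is smooth. By the semigroup property of $t\mapsto K_t$, it suffices to check $F''(0)\le 0$ at an arbitrary body $K=K_{t_0}$. Equivalently, we must show $\mu''\,\mu\le(1-\alpha_0)(\mu')^2$.

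The next step is to write the variations. With $f=\frac{d}{dt}h_{K_t}\circ\nu_K$ on $\partial K$, where $\nu_K$ is the outer unit normal, we have $\mu'=\int_{\partial K} f\,e^{-V}d\mathcal{H}^{n-1}$. The standard second variation formula, obtained using injectivity of the Gauss map, is
\[
\mu''=\int_{\partial K}\Big(H_\mu f^2-\langle \mathrm{II}^{-1}\nabla_{\partial}f,\nabla_{\partial}f\rangle\Big)e^{-V}d\mathcal{H}^{n-1}.
\]
Here $\mathrm{II}$ is the second fundamental form and $H_\mu=H-\langle\nabla V,\nu_K\rangle$ is the weighted mean curvature.

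To bound $\mu''$, we set $c=\mu'/\mu(K)$ and solve the Neumann problem $Lu:=\Delta u-\langle\nabla V,\nabla u\rangle=c$ in $K$ with $\langle\nabla u,\nu_K\rangle=f$ on $\partial K$. This is solvable because integrating $Lu$ against $\mu$ gives exactly the compatibility condition $\int_K c\,d\mu=\int_{\partial K}f\,e^{-V}$. We then apply the weighted Reilly formula:
\[
\int_K (Lu)^2d\mu=\int_K\big(\|\nabla^2u\|^2+\langle\nabla^2V\nabla u,\nabla u\rangle\big)d\mu+\int_{\partial K}\big(H_\mu f^2+\langle\mathrm{II}\nabla_\partial u,\nabla_\partial u\rangle-2\langle\nabla_\partial f,\nabla_\partial u\rangle\big)e^{-V}.
\]
Completing the square, $\langle\mathrm{II}\nabla_\partial u,\nabla_\partial u\rangle-2\langle\nabla_\partial f,\nabla_\partial u\rangle\ge-\langle\mathrm{II}^{-1}\nabla_\partial f,\nabla_\partial f\rangle$. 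This yields
\[
\mu''\le c^2\mu(K)-\int_K\big(\|\nabla^2u\|^2+\langle\nabla^2V\nabla u,\nabla u\rangle\big)d\mu.
\]
If $c\ne0$, then $w=u/c$ satisfies $Lw=1_K$. By the definition of $\alpha_0$, the last integral is at least $c^2\alpha_0\,\mu(K)$. Therefore $\mu''\le(1-\alpha_0)c^2\mu(K)=(1-\alpha_0)(\mu')^2/\mu(K)$, which is the required inequality. If $c=0$, the same bound gives $\mu''\le0$ directly, using $\nabla^2V\ge0$.

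The main obstacle is the derivation and justification of the second variation formula and the Reilly identity with the correct signs and boundary terms. The regularity needed for the Neumann solution up to the boundary also matters, as does passing to the limit from smooth strictly convex bodies back to general members of $\mathcal{K}$. For the boundary computation we would follow Kolesnikov--Milman's weighted Reilly formula, which is integration by parts of the Bochner identity $\frac12L|\nabla u|^2=\|\nabla^2u\|^2+\langle\nabla^2V\nabla u,\nabla u\rangle+\langle\nabla u,\nabla Lu\rangle$. For the approximation step we would rely on Hausdorff continuity of $\mu$ together with stability of the infimum under approximation, which is harmless since we only need the inequality for the limit bodies.
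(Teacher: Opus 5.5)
Your proposal is correct and follows essentially the route the paper points to for this cited lemma of Kolesnikov--Livshyts. That route is Colesanti's reduction of the concavity of $\mu(K_t)^{\alpha_0}$ to the second-variation inequality with $\mu''=\int_{\partial K}(H_\mu f^2-\langle \mathrm{II}^{-1}\nabla_{\partial}f,\nabla_{\partial}f\rangle)e^{-V}$, then the Kolesnikov--Milman step of realizing $f$ as Neumann data of a solution of $\Delta u-\langle\nabla V,\nabla u\rangle=\mu'/\mu(K)$ and applying the weighted Reilly formula with the square completed in $\mathrm{II}^{-1}$. The one step to tighten is the approximation, because smooth approximants of bodies in $\mathcal{K}$ need not lie in $\mathcal{K}$; you also need $\mathcal{K}$ closed under dilations, not just sums, to get $K_t\in\mathcal{K}$. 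To fix it, take $C^\infty_+$ bodies $K^j\supset K$ and $L^j\supset L$ converging to $K,L$, and set $K^j_t=(1-t)K^j+tL^j$. Then restrict any $w$ with $\Delta w-\langle\nabla V,\nabla w\rangle=1$ on $K^j_t$ to $K_t\subset K^j_t$, and use that $\|\nabla^2w\|^2+\langle\nabla^2V\nabla w,\nabla w\rangle\ge 0$. This gives your local bound on $K^j_t$ with constant $\alpha_0\,\mu(K_t)/\mu(K^j_t)\to\alpha_0$ uniformly in $t$, which is the precise form of the ``stability of the infimum'' you invoke.
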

	This formulation builds upon the machinery developed by Colesanti \cite{Colesanti} and Kolesnikov and Milman \cite{milman2,milman1}. Recall that Colesanti \cite{Colesanti} translated the Brunn-Minkowski inequality into an equivalent Poincaré-type inequality on	the boundary of convex bodies; Kolesnikov and Milman \cite{milman2,milman1} expressed a function on $\partial K$ as the Neumann data of an other function defined in the interior of $K$, and then used the Bochner method to prove the Poincaré-type inequality.
	
	\vskip 20pt
	
	\section{\bf Proof of the main results}
	
	\vskip 5pt
	
	In this section, we present the proof of Theorem \ref{main}.
	
	\subsection{The lower bound}
	\ 
    
	First, we give a new estimate for the exponent  $\alpha$ in Theorem \ref{KL}.
	
	\begin{lemma}\label{gen}
		If $\mu$ is a finite Borel  measure on $\mathbb{R}^n$ with density $e^{-V}$ where $V:\mathbb{R}^n\to \mathbb{R}$ is convex and $\nabla^2V$ is positive definite almost everywhere,  and $\alpha$ is the largest number such that 
		$$
		\mu(\lambda K+(1-\lambda) L)^{\alpha} \geq \lambda \mu(K)^{\alpha}+(1-\lambda) \mu(L)^{\alpha}
		$$ holds for all convex bodies $K,L$ in $\mathbb{R}^n$ containing the origin and $\lambda\in[0,1]$, then
		$$
		\alpha\ge \inf_{K \in \mathcal{K}_o^n} \frac{1}{n\mu(K)}\int_K \frac{n}{n+\langle\left(\nabla^2 V\right)^{-1} \nabla V, \nabla V\rangle }d \mu.
		$$
	\end{lemma}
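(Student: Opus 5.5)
We start from Theorem \ref{KL} applied to the family $\mathcal{K}=\mathcal{K}_o^n$, which is closed under Minkowski addition. It then suffices to show that for every $K\in\mathcal{K}_o^n$, and $u$ the $C^2$ solution of $Lu:=\Delta u-\langle\nabla V,\nabla u\rangle=1$ on $K$ (with the Neumann-type data of Theorem \ref{KL}), we have the pointwise inequality
$$
\|\nabla^2 u\|^2+\langle\nabla^2V\nabla u,\nabla u\rangle\ \ge\ \frac{1}{n+\langle(\nabla^2V)^{-1}\nabla V,\nabla V\rangle}
$$
almost everywhere in $K$. Integrating against $d\mu$ and dividing by $\mu(K)$ then gives exactly the integrand $\frac{1}{n}\cdot\frac{n}{n+\langle(\nabla^2V)^{-1}\nabla V,\nabla V\rangle}$ claimed in Lemma \ref{gen}.

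The pointwise bound is a finite-dimensional optimization at each point $x$. Fix $x$ where $H:=\nabla^2V(x)$ is positive definite, and put $w=\nabla V(x)$. The data $A=\nabla^2u(x)$ (symmetric) and $y=\nabla u(x)$ are constrained only by $\mathrm{tr}A-\langle w,y\rangle=1$. By Cauchy--Schwarz on the trace, $\|A\|^2\ge (\mathrm{tr}A)^2/n$. Setting $t=\mathrm{tr}A=1+\langle w,y\rangle$, we get
$$
\|A\|^2+\langle Hy,y\rangle\ \ge\ \frac{(1+\langle w,y\rangle)^2}{n}+\langle Hy,y\rangle .
$$
This is a positive definite quadratic in $y$, so we minimize it directly. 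Write $s=\langle w,y\rangle$. For fixed $s$, Cauchy--Schwarz in the $H$-inner product gives $\langle Hy,y\rangle\ge s^2/\langle H^{-1}w,w\rangle$. With $c=\langle H^{-1}w,w\rangle$, it remains to minimize $\frac{(1+s)^2}{n}+\frac{s^2}{c}$ over $s\in\mathbb{R}$. The minimizer is $s=-\frac{c}{n+c}$ and the minimum value is $\frac{1}{n+c}$. (Equivalently, one can use the Sherman--Morrison formula for $(H+\frac1n ww^T)^{-1}$; this is presumably why it was recalled in the preliminaries.) The case $w=0$ is covered by $c=0$, giving the bound $1/n$.

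The main obstacle is technical rather than algebraic. Theorem \ref{KL} needs $u\in C^2$ and uses $\nabla^2V$ pointwise, but $V$ is merely convex, with $\nabla^2V$ existing and positive definite only a.e. The pointwise argument only needs $H$ invertible at a.e.\ point, which is exactly the hypothesis, so the inequality holds $\mu$-a.e.\ and integrates fine. If Theorem \ref{KL} formally requires smooth $V$, I would approximate $V$ by smooth strictly convex potentials. I would then pass to the limit in the right-hand side, either by Fatou's lemma or by checking that the integrand $\frac{1}{n+c}\in(0,1/n]$ is bounded and converges a.e.; the uniform boundedness of the integrand makes this step benign.
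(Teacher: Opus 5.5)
Your proposal is correct and follows essentially the same route as the paper. The paper likewise combines $\|\nabla^2u\|^2\ge\frac1n(\Delta u)^2=\frac1n(1+\langle\nabla V,\nabla u\rangle)^2$ with the Cauchy--Schwarz bound $\langle\nabla^2V\nabla u,\nabla u\rangle\ge\langle\nabla V,\nabla u\rangle^2/\langle(\nabla^2V)^{-1}\nabla V,\nabla V\rangle$. It then completes the square in $s=\langle\nabla V,\nabla u\rangle$, which is exactly your one-variable minimization giving the pointwise bound $\frac{1}{n+c}$. Your explicit handling of the case $\nabla V=0$, where the paper divides by $c$, is a small improvement. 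Your regularity caveat goes beyond what the paper does, since it simply applies Theorem \ref{KL} as stated.
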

	
	\begin{proof}
		By Theorem \ref{KL}, it suffices to show that
		$$
		\int_K(\left\|\nabla^2 u\right\|^2+\left\langle\nabla^2 V \nabla u, \nabla u\right\rangle) d \mu \geq \frac{1}{n}\int_K \frac{n}{n+\langle\left(\nabla^2 V\right)^{-1} \nabla V, \nabla V\rangle }d \mu,
		$$
		where $u$ is a $C^2$ function on $K$ with $\Delta u-\left\langle \nabla V, \nabla u\right\rangle=1_K$.

		Since $\nabla^2 V$ is positive definite almost everywhere, by the Cauchy-Schwarz inequality we have
		$$
		\left\langle\nabla^2 V \nabla u, \nabla u\right\rangle\langle\left(\nabla^2 V\right)^{-1} \nabla V, \nabla V\rangle\ge \left\langle \nabla V, \nabla u\right\rangle^2.
		$$
		Thus,
		\begin{equation*}
			\int_K \left\langle\nabla^2 V \nabla u, \nabla u\right\rangle d \mu\ge \int_K \frac{\left\langle \nabla V, \nabla u\right\rangle^2}{\langle\left(\nabla^2 V\right)^{-1} \nabla V, \nabla V\rangle}d \mu.
		\end{equation*}
		
		Meanwhile, using the Cauchy-Schwarz inequality, we have
		$$
		\left\|\nabla^2 u\right\|^2 \geq \frac{1}{n} |\Delta u|^2.
		$$
		Thus, 
		\begin{equation*}
			\begin{aligned}
				\int_K\left\|\nabla^2 u\right\|^2 d \mu &\geq \frac{1}{n} \int_K|\Delta u|^2 d \mu  = \frac{1}{n} \int_K|\left\langle \nabla V, \nabla u\right\rangle+1|^2 d \mu \\&=\frac{1}{n}\int_K\big( 1+2\left\langle \nabla V, \nabla u\right\rangle+\left\langle \nabla V, \nabla u\right\rangle^2\big) d \mu.
			\end{aligned}
		\end{equation*}
		
		Adding the above two  inequalities,  we have
		$$
		\begin{aligned}
			&\int_K\big(\left\|\nabla^2 u\right\|^2+\left\langle\nabla^2 V \nabla u, \nabla u\right\rangle\big) d \mu \\ \geq& \frac{1}{n}\int_K \Big( 1+2\left\langle \nabla V, \nabla u\right\rangle+\big(1+\frac{n}{\langle\left(\nabla^2 V\right)^{-1} \nabla V, \nabla V\rangle}\big)\left\langle \nabla V, \nabla u\right\rangle^2\Big) d \mu\\
			=&\frac{1}{n}\int_K \Big( \big(1+\frac{n}{\langle\left(\nabla^2 V\right)^{-1} \nabla V, \nabla V\rangle}\big)\big(\left\langle \nabla V, \nabla u\right\rangle+\frac{\langle\left(\nabla^2 V\right)^{-1} \nabla V, \nabla V\rangle}{n+\langle\left(\nabla^2 V\right)^{-1} \nabla V, \nabla V\rangle}\big)^2\\
            &+\frac{n}{n+\langle\left(\nabla^2 V\right)^{-1} \nabla V, \nabla V\rangle}\Big) d \mu \ge \frac{1}{n}\int_K \frac{n}{n+\langle\left(\nabla^2 V\right)^{-1} \nabla V, \nabla V\rangle }d \mu,
		\end{aligned}
		$$
		as desired.
	\end{proof}

    To reformulate  $\inf_{K \in \mathcal{K}^n_o} \frac{1}{n\mu(K)}\int_K \frac{n}{n+\langle\left(\nabla^2 V\right)^{-1} \nabla V, \nabla V\rangle }d \mu$, we prove the following lemma.
	
	\begin{lemma}\label{origin}
		Let $\mu$ be a finite Borel measure on $\mathbb{R}^n$ with density $e^{-V}$ and that $V:\mathbb{R}^n\to \mathbb{R}$ is continuous. If $F:\mathbb{R}^{n}\to (0,+\infty)$ is continuous, then
		$$
		\inf_{K \in \mathcal{K}^n_o}\frac{1}{\mu(K)}\int_K Fd\mu=\inf_{\theta\in \mathbb{S}^{n-1}}\{\inf_{R>0}\frac{\int_{0}^{R}F(r\theta)r^{n-1}e^{-V(r\theta)}dr}{\int_{0}^{R}r^{n-1}e^{-V(r\theta)}dr}\}.
		$$
	\end{lemma}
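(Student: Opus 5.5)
Denote by $I$ the left-hand side and by $J$ the right-hand side of the identity in Lemma \ref{origin}. We prove $I\ge J$ and $I\le J$ separately. For $\theta\in\mathbb{S}^{n-1}$ and $R>0$ set
$$
g_\theta(r)=r^{n-1}e^{-V(r\theta)},\qquad Q(\theta,R)=\frac{\int_0^R F(r\theta)g_\theta(r)\,dr}{\int_0^R g_\theta(r)\,dr}.
$$

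\emph{Step 1: $I\ge J$ via polar coordinates.} Let $K\in\mathcal{K}^n_o$. Since $o\in K$ and $K$ is convex, $K$ is star-shaped with respect to the origin. Its radial function $\rho_K(\theta)$ is positive for a.e. $\theta$; it can vanish only on a null set of directions when $o\in\partial K$. Polar coordinates give
$$
\int_K F\,d\mu=\int_{\mathbb{S}^{n-1}}\int_0^{\rho_K(\theta)}F(r\theta)g_\theta(r)\,dr\,d\theta,
\qquad
\mu(K)=\int_{\mathbb{S}^{n-1}}\int_0^{\rho_K(\theta)}g_\theta(r)\,dr\,d\theta .
$$
For each $\theta$ with $\rho_K(\theta)>0$, the inner numerator is at least $J\int_0^{\rho_K(\theta)}g_\theta\,dr$. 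Directions with $\rho_K(\theta)=0$ contribute zero to both integrals. Integrating over $\theta$ gives $\int_K F\,d\mu\ge J\,\mu(K)$, and hence $I\ge J$.

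\emph{Step 2: $I\le J$ via thin cones.} Fix $\theta_0$ and $R>0$, and let $\varepsilon>0$. Take
$$
K_\delta=\operatorname{conv}\bigl(\{0\}\cup (R\theta_0+\delta B^n)\bigr),
$$
a convex body containing the origin, with nonempty interior for $\delta>0$. Then $K_\delta\subset\{r\theta:\ |\theta-\theta_0|\le c\delta/R,\ 0\le r\le\rho_{K_\delta}(\theta)\}$, where $\rho_{K_\delta}(\theta)\to R$ uniformly on the small cap of directions it occupies.

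By polar coordinates, $\frac{1}{\mu(K_\delta)}\int_{K_\delta}F\,d\mu$ is a weighted average, over $\theta$ in this cap, of the one-dimensional ratios $Q(\theta,\rho_{K_\delta}(\theta))$, with weights $\int_0^{\rho_{K_\delta}(\theta)}g_\theta\,dr$. The functions $F$ and $V$ are uniformly continuous on the compact set $R B^n$ (enlarged slightly), and the denominators are bounded below, since $\int_0^{R/2}g_\theta\,dr\ge c>0$ uniformly in $\theta$. Hence $Q(\theta,\rho)$ is jointly continuous near $(\theta_0,R)$. It follows that $\frac{1}{\mu(K_\delta)}\int_{K_\delta}F\,d\mu\to Q(\theta_0,R)$ as $\delta\to0$. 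Consequently $I\le Q(\theta_0,R)$, and taking the infimum over $\theta_0$ and $R$ gives $I\le J$.

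\emph{Main obstacle.} The delicate part is the continuity argument in Step 2. One has to check that, for all $\theta$ in the cap, the radial extent $\rho_{K_\delta}(\theta)$ converges to $R$ uniformly. This holds because the cap has angular radius $O(\delta/R)$ and the boundary of $K_\delta$ lies within distance $\delta$ of $R\theta_0$.

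One also has to note that the weighted average of values all within $\varepsilon$ of $Q(\theta_0,R)$ is itself within $\varepsilon$ of it. This requires no global integrability or finiteness beyond compactness of $K_\delta$. The finiteness of $\mu$ is used only so that all quantities are well defined.
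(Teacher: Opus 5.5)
Your proof is correct and follows the paper's route: polar coordinates give $\int_K F\,d\mu\ge J\,\mu(K)$, and thin convex bodies around the segment from $0$ to $R\theta_0$ give the reverse inequality. The paper's test body is the spherical sector $\{r\theta: 0\le r\le R,\ |\theta-\theta_0|\le\delta\}$, whose radial function is exactly $R$ on the cap, so it does not need your uniform convergence of $\rho_{K_\delta}$ or joint continuity of $Q$.

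One side remark in your Step~1 is false. If $o\in\partial K$, then $K$ lies in a half-space $\{\langle x,v\rangle\le 0\}$, and $\rho_K$ vanishes on the whole open hemisphere $\{\langle\theta,v\rangle>0\}$, not on a null set. This is harmless, because your next sentence correctly notes that such directions contribute zero to both integrals.
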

	
	\begin{proof}
		Integrating by polar coordinates, we have
		$$
		\begin{aligned}
			\frac{1}{\mu(K)}\int_K Fd\mu=&\frac{\int_K Fd\mu }{\int_K d\mu}
			=&\frac{\int_{\mathbb{S}^{n-1}}\int_{0}^{\rho_K(\theta)} F(r\theta)r^{n-1}e^{-V(r\theta)}dr d\theta}{\int_{\mathbb{S}^{n-1}}\int_{0}^{\rho_K(\theta)} r^{n-1} e^{-V(r\theta)}dr d\theta}.
		\end{aligned}
		$$
		Define $$g_\theta(t)=\frac{\int_{0}^{t} F(r\theta) r^{n-1} e^{-V(r\theta)}dr}{\int_{0}^{t} r^{n-1} e^{-V(r\theta)}dr},\quad t> 0,~~\theta\in \mathbb{S}^{n-1}.$$ 
		
		First,  we prove $\inf_{K \in \mathcal{K}^n_o}\frac{1}{\mu(K)}\int_K Fd\mu\ge \inf_{\theta\in \mathbb{S}^{n-1}}\{\inf_{R>0}\frac{\int_{0}^{R}F(r\theta)r^{n-1}e^{-V(r\theta)}dr}{\int_{0}^{R}r^{n-1}e^{-V(r\theta)}dr}\}$.
		
		Since $g_\theta(\rho_K(\theta))\ge \inf_{R>0}g_{\theta}(R)$ for all $\theta\in \mathbb{S}^{n-1}$, we have
		$$
		\int_{0}^{\rho_K(\theta)} F(r\theta)r^{n-1} e^{-V(r\theta)}dr\ge  \inf_{\theta\in \mathbb{S}^{n-1}}\{\inf_{R>0}g_{\theta}(R)\} \int_{0}^{\rho_K(\theta)} r^{n-1}e^{-V(r\theta)}dr, \quad \forall \theta\in \mathbb{S}^{n-1}.
		$$
		Integrating both sides over $\mathbb{S}^{n-1}$, we obtain
		$$
		\frac{\int_{\mathbb{S}^{n-1}}\int_{0}^{\rho_K(\theta)} F(r\theta)r^{n-1}e^{-V(r\theta)}dr d\theta}{\int_{\mathbb{S}^{n-1}}\int_{0}^{\rho_K(\theta)} r^{n-1} e^{-V(r\theta)}dr d\theta}\ge \inf_{\theta\in \mathbb{S}^{n-1}}\{\inf_{R>0}g_{\theta}(R)\}.
		$$
		Thus,
		$$
		\inf_{K \in \mathcal{K}^n_o}\frac{1}{\mu(K)}\int_K Fd\mu\ge \inf_{\theta\in \mathbb{S}^{n-1}}\{\inf_{R>0}g_{\theta}(R)\}=\inf_{\theta\in \mathbb{S}^{n-1}}\{\inf_{R>0}\frac{\int_{0}^{R}F(r\theta)r^{n-1}e^{-V(r\theta)}dr}{\int_{0}^{R}r^{n-1}e^{-V(r\theta)}dr}\}.
		$$
		
		Second, we prove $\inf_{K \in \mathcal{K}^n_o}\frac{1}{\mu(K)}\int_K Fd\mu\le \inf_{\theta\in \mathbb{S}^{n-1}}\{\inf_{R>0}\frac{\int_{0}^{R}F(r\theta)r^{n-1}e^{-V(r\theta)}dr}{\int_{0}^{R}r^{n-1}e^{-V(r\theta)}dr}\}$.
		
		By the definition of the infimum, for each  $\varepsilon>0$, there exist $\theta_\varepsilon\in \mathbb{S}^{n-1}$ and $R_\varepsilon>0$ such that $g_{\theta_\varepsilon}(R_\varepsilon)<\inf_{\theta\in \mathbb{S}^{n-1}}\{\inf_{R>0}g_{\theta}(R)\}+\varepsilon$. Given $\delta>0$ and $R>0$. Let $$K_{\delta,R}=\{(r,\theta)|\ 0\le r\le R, \|\theta-\theta_\varepsilon\|\le \delta, \theta\in\mathbb{S}^{n-1} \}.$$ Then $K_{\delta,R}$ is a convex body, and
		$$
		\begin{aligned}
			\frac{1}{\mu(K_{\delta,R})}\int_{K_{\delta,R}} Fd\mu=\frac{\int_{\|\theta-\theta_\varepsilon\|\le \delta, \theta\in\mathbb{S}^{n-1}}\int_{0}^{R} F(r\theta)r^{n-1}e^{-V(r\theta)}dr d\theta}{\int_{\|\theta-\theta_\varepsilon\|\le \delta, \theta\in\mathbb{S}^{n-1}}\int_{0}^{R} r^{n-1}e^{-V(r\theta)}dr d\theta}.
		\end{aligned}
		$$
		Since  $F$ and $V$ are continous, for $\delta\to 0$, we have 
		$$
		\frac{\int_{0}^{R} F(r\theta)r^{n-1}e^{-V(r\theta)}dr }{\int_{0}^{R} r^{n-1}e^{-V(r\theta)}dr }\to \frac{\int_{0}^{R} F(r\theta_\varepsilon)r^{n-1}e^{-V(r\theta_\varepsilon)}dr}{\int_{0}^{R} r^{n-1}e^{-V(r\theta_\varepsilon)}dr }
		$$
		for all $ \theta\in\mathbb{S}^{n-1}$:  $\|\theta-\theta_\varepsilon\|\le \delta$. Thus, $$\lim_{\delta\to 0}\frac{1}{\mu(K_{\delta,R})}\int_{K_{\delta,R}} Fd\mu=g_{\theta_\varepsilon}(R).$$
		
		Letting $R$ tends to $R_\varepsilon$, we have $$
		\lim_{R\to R_\varepsilon}\lim_{\delta\to 0}\frac{1}{\mu(K_{\delta,R})}\int_{K_{\delta,R}} Fd\mu=g_{\theta_\varepsilon}(R_\varepsilon)<\inf_{\theta\in \mathbb{S}^{n-1}}\{\inf_{R>0}g_{\theta}(R)\}+\varepsilon.
		$$
		By the arbitrariness of $\varepsilon$, we have 
		$$
		\inf_{K \in \mathcal{K}^n_o}\frac{1}{\mu(K)}\int_K Fd\mu\le \inf_{\theta\in \mathbb{S}^{n-1}}\{\inf_{R>0}g_{\theta}(R)\}=\inf_{\theta\in \mathbb{S}^{n-1}}\{\inf_{R>0}\frac{\int_{0}^{R}F(r\theta)r^{n-1}e^{-V(r\theta)}dr}{\int_{0}^{R}r^{n-1}e^{-V(r\theta)}dr}\}.
		$$
		This completes the proof.
	\end{proof}

	\begin{lemma}\label{dandiao}
		Let $\mu$ be a finite Borel measure on $\mathbb{R}^n$ with density $e^{-V}$ and that $V:\mathbb{R}^n\to \mathbb{R}$ is continuous. If $F:\mathbb{R}^{n}\to (0,+\infty)$ is  continuous and $F(r\theta)$ is decreasing in $r\ge 0$ for all  $\theta\in \mathbb{S}^{n-1}$, then
		$$
		\inf_{K \in \mathcal{K}^n_o}\frac{1}{\mu(K)}\int_K Fd\mu=\inf_{\theta\in \mathbb{S}^{n-1}}\{\frac{\int_{0}^{+\infty}F(r\theta)r^{n-1}e^{-V(r\theta)}dr}{\int_{0}^{+\infty}r^{n-1}e^{-V(r\theta)}dr}\}.
		$$
	\end{lemma}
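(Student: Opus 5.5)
By Lemma \ref{origin}, the left-hand side equals $\inf_{\theta}\inf_{R>0} g_\theta(R)$, where
$$
g_\theta(R)=\frac{\int_0^R F(r\theta)r^{n-1}e^{-V(r\theta)}dr}{\int_0^R r^{n-1}e^{-V(r\theta)}dr}.
$$
So it suffices to show that for each fixed $\theta\in\mathbb{S}^{n-1}$ the function $R\mapsto g_\theta(R)$ is nonincreasing on $(0,+\infty)$. Then $\inf_{R>0}g_\theta(R)=\lim_{R\to+\infty}g_\theta(R)$. Both integrals converge as $R\to+\infty$: the denominator does because it is the radial density of the finite measure $\mu$ in direction $\theta$, and the numerator does because $0<F(r\theta)\le F(0)$. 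I am implicitly assuming here that the one-dimensional integral $\int_0^\infty r^{n-1}e^{-V(r\theta)}dr$ is finite for every $\theta$. Continuity of $V$ and finiteness of $\mu$ give this for almost every $\theta$. For the measures actually used in the paper ($V$ convex with $e^{-V}$ integrable, so $V$ grows at least linearly) it holds for every $\theta$. If needed, I would add this as a harmless remark. With finiteness in hand, the limit is exactly the ratio on the right-hand side, and taking $\inf_\theta$ gives the claim.

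For the monotonicity, write $w(r)=r^{n-1}e^{-V(r\theta)}>0$, $f(r)=F(r\theta)$, $A(R)=\int_0^R fw$ and $B(R)=\int_0^R w$. Since $f$ and $w$ are continuous, $g_\theta$ is differentiable for $R>0$, and
$$
g_\theta'(R)=\frac{w(R)\bigl(f(R)B(R)-A(R)\bigr)}{B(R)^2}=\frac{w(R)}{B(R)^2}\int_0^R\bigl(f(R)-f(r)\bigr)w(r)\,dr\le 0,
$$
because $f(r)\ge f(R)$ for $r\le R$. Equivalently, $g_\theta(R)$ is the average of the decreasing function $f$ over $[0,R]$ against the weight $w$, and enlarging the interval only adds smaller values.

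The main (mild) obstacle is justifying the passage to $R=+\infty$ and the finiteness of the radial integrals for every direction, as discussed above. The monotonicity itself is routine.
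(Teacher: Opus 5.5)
Your proposal is correct and follows the paper's proof essentially verbatim: reduce to $\inf_\theta\inf_{R>0}g_\theta(R)$ via Lemma \ref{origin}, show $g_\theta$ is nonincreasing by the same derivative computation $g_\theta'(R)=\frac{w(R)}{B(R)^2}\int_0^R\bigl(f(R)-f(r)\bigr)w(r)\,dr\le 0$, and identify the infimum with the limit as $R\to+\infty$. The paper leaves the finiteness of the one-dimensional radial integrals implicit. Your remark covers it correctly: these integrals are finite for a.e.\ direction in general, and for every direction when $V$ is convex with $e^{-V}$ integrable.
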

	
	\begin{proof}
		Define $$g_\theta(t)=\frac{\int_{0}^{t} F(r\theta)r^{n-1}e^{-V(r\theta)}dr}{\int_{0}^{t} r^{n-1}e^{-V(r\theta)}dr},\quad t> 0,\, \theta\in \mathbb{S}^{n-1}.$$
	 Since $F(r\theta)$ is decreasing in $r\ge 0$ for all  $\theta\in \mathbb{S}^{n-1}$, we have
		$$
		\begin{aligned}
			\frac{d}{d t} g_\theta(t)&=\frac{e^{-V(t \theta)} t^{n-1}F(t \theta) \int_0^t r^{n-1} e^{-V(r \theta)} d r-e^{-V(t \theta)} t^{n-1} \int_0^tF(r \theta)r^{n-1} e^{-V(r \theta)}  d r}{\big(\int_0^t r^{n-1} e^{-V(r \theta)} d r\big)^2}\\
			&=\frac{e^{-V(t \theta)} t^{n-1} \int_0^t(F(t \theta)-F(r \theta))r^{n-1} e^{-V(r \theta)} d r}{\big(\int_0^t r^{n-1} e^{-V(r \theta)} d r\big)^2}\le 0 .
		\end{aligned}
		$$
		Hence, $g_\theta(t)$ is decreasing. Since $F>0$, it follows that $g_\theta(t)>0$. So,  
		$$
		\inf_{R>0}g_\theta(R)=g_\theta(+\infty).
		$$
		By Lemma \ref{origin}, we have
		$$
		\begin{aligned}
			\inf_{K \in \mathcal{K}^n_o}\frac{1}{\mu(K)}\int_K Fd\mu&= \inf_{\theta\in \mathbb{S}^{n-1}}\{\inf_{R>0}g_{\theta}(R)\}=\inf_{\theta\in \mathbb{S}^{n-1}}\{g_{\theta}(+\infty)\}\\&=\inf_{\theta\in \mathbb{S}^{n-1}}\{\frac{\int_{0}^{+\infty}F(r\theta)r^{n-1}e^{-V(r\theta)}dr}{\int_{0}^{+\infty}r^{n-1}e^{-V(r\theta)}dr}\}.
		\end{aligned}
		$$
		This completes the proof.
	\end{proof}

	\begin{proof}[Proof of the lower bound:]
		For brevity, write $\mu$ for the measure on $\mathbb{R}^n$ with density $e^{-\frac{|x|^p}{p}}$. 
		Up to a constant depending on $n$ and $p$, it suffices  to prove the lower bound for $\mu$.
		We divide the proof into three steps. We first assume that $p>1$, then use  approximation  to prove the case $p=1$.
		
		\textbf{Step 1.} Reformulate the lower bound of $\alpha_p(n)$ for $p>1$.
		
		Define $V: \mathbb{R}^n\to R$ as $V(x)=\frac{|x|^p}{p}$, $p>1$. Then,
		$$
		\begin{aligned}
			\nabla V(x)&= |x|^{p-2}x, \\
			\nabla^2 V(x)&=(p-1)|x|^{p-4} x x^T+|x|^{p-2}\big(I-\frac{x x^T}{|x|^2}\big),\quad x\neq 0,
		\end{aligned}
		$$
        where $x^T$ denotes the transpose of $x \in  \mathbb{R}^n$.
		
        Since $(p-1)|x|^{p-4} x x^T$ is positive definite and $I-\frac{x x^T}{|x|^2}$ is semi-positive definite for $x\neq 0$, it follows that $\nabla^2 V(x)$ is positive definite for $x\neq 0$.  Hence
		$$
		(\nabla^2 V)^{-1}(x)= \frac{x x^T}{(p-1)|x|^{p-4}}+\frac{1}{|x|^{p-2}}(I-\frac{x x^T}{|x|^2}),\quad x\neq 0,
		$$
		and therefore
		$$
		\langle(\nabla^2 V)^{-1} \nabla V, \nabla V\rangle (x)= \frac{|x|^p}{p-1}, \quad x\neq 0.
		$$
        
		Thus, for all $r> 0$ and $\theta \in \mathbb{S}^{n-1}$, we obtain
		$$
		\frac{n}{n+\langle(\nabla^2 V)^{-1} \nabla V, \nabla V\rangle }(r\theta)= \frac{n}{n+\frac{r^p}{p-1}}, 
		$$
		which is continuous, positive and decreasing in $r$ for all  $\theta\in\mathbb{S}^{n-1}$. By Lemmas \ref{gen} and  \ref{dandiao}, 
		$$
		\begin{aligned}
			\alpha_p(n)\ge&\inf_{K \in \mathcal{K}^n_o} \frac{1}{n\mu(K)}\int_K \frac{n}{n+\langle(\nabla^2 V)^{-1} \nabla V, \nabla V\rangle }d \mu\\=&\frac{1}{n}\inf_{\theta\in \mathbb{S}^{n-1}}\{\frac{\int_{0}^{+\infty}\frac{n}{n+\frac{r^p}{p-1}}r^{n-1}e^{-V(r\theta)}dr}{\int_{0}^{+\infty}r^{n-1}e^{-V(r\theta)}dr}\}=\frac{1}{n}\inf_{\theta\in \mathbb{S}^{n-1}}\{\frac{\int_{0}^{+\infty}\frac{ne^{-\frac{r^p}{p}}}{n+\frac{r^p}{p-1}}r^{n-1}dr}{\int_{0}^{+\infty}e^{-\frac{r^p}{p}}r^{n-1}dr}\}.
		\end{aligned}
		$$
		
		\textbf{Step 2.}  Compute the above infimum.

        Let $u = \frac{r^p}{p}$, $p>1$. Then
\begin{equation*}
    \frac{\int_{0}^{+\infty}\frac{ne^{-\frac{r^p}{p}}}{n+\frac{r^p}{p-1}}r^{n-1}dr}{\int_{0}^{+\infty}r^{n-1}e^{-\frac{r^p}{p}}dr}
    = \frac{\frac{n}{p} p^{-\frac{n}{p}} \int_0^{+\infty} \frac{e^{-u} u^{\frac{n}{p}-1}}{n+\frac{p}{p-1} u} du}{\frac{1}{p} p^{-\frac{n}{p}} \Gamma\bigl(\frac{n}{p}\bigr)}
    = \frac{n}{\Gamma(\frac{n}{p})} \int_0^{+\infty} \frac{e^{-u} u^{\frac{n}{p}-1}}{n+\frac{p}{p-1} u} du.
\end{equation*}

Note that
\[
\begin{aligned}
    &\int_0^{+\infty} \frac{e^{-u} u^{\frac{n}{p}-1}}{n+\frac{p}{p-1} u} du = \frac{p-1}{p} \int_0^{+\infty} \frac{e^{-u} u^{\frac{n}{p}-1}}{\frac{(p-1)n}{p} + u} du\\
    =&\frac{p-1}{p} \int_0^{+\infty} e^{-u} u^{\frac{n}{p}-1} ( \int_0^{+\infty} e^{-(\frac{(p-1)n}{p} + u)s} ds ) du.
\end{aligned}
\]
Using the Fubini Theorem, we have
\[
\begin{aligned}
     \int_0^{+\infty} \frac{e^{-u} u^{\frac{n}{p}-1}}{n+\frac{p}{p-1} u} du=& \frac{p-1}{p} \int_0^{+\infty} e^{-\frac{(p-1)n}{p} s} ( \int_0^{+\infty} e^{-u(1+s)} u^{\frac{n}{p}-1} du ) ds \\
    =& \frac{p-1}{p} \Gamma(\frac{n}{p}) \int_0^{+\infty} e^{-\frac{(p-1)n}{p} s} (1+s)^{-\frac{n}{p}} ds.
\end{aligned}
\]

Let $t = \frac{(p-1)n}{p} (1+s)$, $p>1$. It follows that
\[
\begin{aligned}
    \int_0^{+\infty} e^{-\frac{(p-1)n}{p} s} (1+s)^{-\frac{n}{p}} ds &= \int_{\frac{(p-1)n}{p}}^{+\infty} e^{-(t - \frac{(p-1)n}{p})}  (\frac{p}{n(p-1)})^{-\frac{n}{p}+1} t^{-\frac{n}{p}} dt \\
    &= (\frac{(p-1)n}{p})^{\frac{n}{p} - 1} e^{\frac{(p-1)n}{p}} \int_{\frac{(p-1)n}{p}}^{+\infty} e^{-t} t^{-\frac{n}{p}} dt \\
    &= e^{\frac{(p-1)n}{p}} (\frac{(p-1)n}{p})^{\frac{n}{p} - 1} \Gamma(1-\frac{n}{p}, \frac{(p-1)n}{p}).
\end{aligned}
\]

Thus
\[
\begin{aligned}
    \frac{n}{\Gamma(\frac{n}{p})} \int_0^{+\infty} \frac{e^{-u} u^{\frac{n}{p}-1}}{n+\frac{p}{p-1} u} du &= \frac{n}{\Gamma(\frac{n}{p})} \cdot \frac{p-1}{p} \Gamma(\frac{n}{p})  \cdot e^{\frac{(p-1)n}{p}} (\frac{(p-1)n}{p})^{\frac{n}{p} - 1} \Gamma(1-\frac{n}{p}, \frac{(p-1)n}{p}) \\
    &= e^{\frac{(p-1)n}{p}} ( \frac{(p-1)n}{p} )^{\frac{n}{p}} \Gamma(1-\frac{n}{p}, \frac{(p-1)n}{p}).
\end{aligned}
\]

	Combining Step 1 and the above equations, we obtain
		$$
		\begin{aligned}
			\alpha_p(n)&\ge\frac{1}{n}\inf_{\theta\in \mathbb{S}^{n-1}}\{\frac{\int_{0}^{+\infty}\frac{ne^{-\frac{r^p}{p}}}{n+\frac{r^p}{p-1}}r^{n-1}dr}{\int_{0}^{+\infty}e^{-\frac{r^p}{p}}r^{n-1}dr}\}\\&=\frac{1}{n}e^{\frac{(p-1)n}{p}} ( \frac{(p-1)n}{p} )^{\frac{n}{p}} \Gamma(1-\frac{n}{p}, \frac{(p-1)n}{p}),\quad p>1.
		\end{aligned}
		$$

		\textbf{Step 3.} Prove the case $p=1$.

        Let $$
		m(p) = \frac{1}{n}e^{\frac{(p-1)n}{p}} ( \frac{(p-1)n}{p} )^{\frac{n}{p}} \Gamma(1-\frac{n}{p}, \frac{(p-1)n}{p}), \quad p>1.
		$$
         Since  $\lim_{p\to 1^+}( \frac{(p-1)n}{p} )^{\frac{n}{p}} \Gamma(1-\frac{n}{p}, \frac{(p-1)n}{p})=0$, it follows that $\lim_{p \to 1^+} m(p) = 0$. 
		
        Meanwhile, from Step 2 it follows that for all $K, L \in \mathcal{K}_o^n$ and all $\lambda \in [0, 1],$
		\begin{equation*}
			\mu_{p}(\lambda K+(1-\lambda) L)^{m(p)} \geq \lambda \mu_{p}(K)^{m(p)}+(1-\lambda) \mu_{p}(L)^{m(p)}.
		\end{equation*}
		So,
		\begin{equation}\label{logmu}
			\frac{\mu_p(\lambda K+(1-\lambda) L)^{m(p)} - 1}{m(p)} \geq \lambda \frac{\mu_p(K)^{m(p)} - 1}{m(p)}+(1-\lambda) \frac{\mu_p(L)^{m(p)} - 1}{m(p)}.
		\end{equation}

 Since $\lim_{p \to 1^+}e^{-\frac{|x|^p}{p}}=e^{-|x|}$ and $e^{-|x|}\le 1$ for  $x\in \mathbb{R}^n$, by the dominated convergence theorem,  we have $\lim_{p \to 1^+} \mu_p(A) = \mu_1(A)$ for any  $A \in \mathcal{K}_o^n$.
 
		Taking  $p \to 1^+$ on both sides of \eqref{logmu}, it yields that
		$$
		\ln \mu_1(\lambda K+(1-\lambda) L) \geq \lambda \ln \mu_1(K)+(1-\lambda) \ln \mu_1(L).
		$$
		That is, $\alpha_1(n) \geq 0=m(1)$.
	\end{proof}

	\begin{remark}\label{re3}
		For the standard Gaussian distribution $\gamma_n$, our result implies that
		$$
		\alpha_2(n)\ge \frac{1}{n}e^{\frac{n}{2}} ( \frac{n}{2} )^{\frac{n}{2}} \Gamma(1-\frac{n}{2}, \frac{n}{2}).
		$$
		For $n=2$, it gives that $ \alpha_2(2)\ge \frac{e}{2} \Gamma\left(0, 1\right)\approx 0.298$.
		
		In \cite[Remark 6.7]{Livshyts}, Kolesnikov and Livshyts demonstrated that
		\begin{equation}\label{6.7}
			\alpha_2(2)\ge\inf_{K \in \mathcal{K}^2_o} \frac{1}{\gamma_2(K)}\int_{K}\frac{1}{|x|^2+2}d\gamma_2.
		\end{equation}
		Using Caffarelli’s contraction theorem \cite{Caffarelli}, they  showed that the infimum in \eqref{6.7} equals $\frac{e}{2} \Gamma\left(0, 1\right)$ on the origin-symmetric  set $\mathcal{K}^2_{os}$. Using our method, we showed that this infimum equals $\frac{e}{2} \Gamma\left(0, 1\right)$ on the set $\mathcal{K}^2_{o}$. Moreover, we extend this result to arbitrary dimension and $p\ge 1$.
	\end{remark}
	
	\subsection{The upper bound}
	
	\begin{proof}[Proof of the upper bound:] For brevity, write $\mu$ for the measure on $\mathbb{R}^n$ with density $e^{-\frac{|x|^p}{p}}$, $p\ge 1$. 
		Up to a constant depending on $n$ and $p$, it suffices  to prove the  upper bound for $\mu$. 
		
		\textbf{Step 1.} Construct convex sets.
		
		Fix $\alpha \in(0, \frac{\pi}{2})$ and $\varepsilon>0$. Let
		$$
		\begin{aligned}
			A& =\big\{\left(x_1,x_2,\ldots, x_n\right)\in \mathbb{R}^n:\, x_n \geq \big(\sum_{i=1}^{n-1}x_i^2\big)^{\frac{1}{2}} \tan \alpha\big\}. \\
			B& =\big\{\left(x_1,x_2,\ldots, x_n\right)\in \mathbb{R}^n:\, x_n \geq \big(\sum_{i=1}^{n-1}x_i^2\big)^{\frac{1}{2}} \tan \alpha-\varepsilon\big\}.
		\end{aligned}
		$$
		Then $o \in A \cap B$, $A, B$ are convex, and $\lambda A+(1-\lambda)B=A+(1-\lambda)(0,\ldots,0,-\varepsilon)$ for all $\lambda\in [0,1]$. 
		
		\textbf{Step 2.} Evaluate the measure of the Minkowski combination of $A$ and $B$.
		
		Define
		$$
		H(r, s)=\int_s^{+\infty} e^{-\frac{(r^2+t^2)^{p/2}}{p}} d t, \quad r \geq 0,~~s\in \mathbb{R}.
		$$
        
		By the polar coordinates on span$\{x_1,\ldots,x_{n-1}\}$, we have
		\begin{equation*}
			\begin{aligned}
				\mu(A)=& \int_{\mathbb{R}^{n-1}}\int_{(\sum_{i=1}^{n-1}x_i^2)^{\frac{1}{2}}\tan \alpha}^{+\infty} e^{-\frac{|x|^{p}}{p}} d x_n dx_1dx_2\cdots dx_{n-1} \\
                =&(n-1)\omega_{n-1}\int_{0}^{+\infty}r^{n-2}\int_{r\tan \alpha}^{+\infty}e^{-\frac{(r^2+x_n^2)^{p/2}}{p}} d x_n dr
                \\=&(n-1)\omega_{n-1}\int_{0}^{+\infty}r^{n-2}H(r,r\tan \alpha)dr.
                	\end{aligned}
		\end{equation*}
        
        Similarly, we have
        \begin{equation*}
            \mu(B)=(n-1)\omega_{n-1}\int_{0}^{+\infty}r^{n-2}H(r,r\tan \alpha-\varepsilon)dr
		\end{equation*}
and
                    \begin{equation*}
				\mu(\lambda A+(1-\lambda) B)=(n-1)\omega_{n-1}\int_{0}^{+\infty}r^{n-2}H(r,r\tan \alpha-(1-\lambda)\varepsilon)dr.
		\end{equation*}
		
		\textbf{Step 3.} Compute the expansion of  $\mu(\lambda A+(1-\lambda) B)^q-\lambda\mu(A)^q-(1-\lambda)\mu(B)^q$, $q>0$.
		
		Since
		$$
		\begin{aligned}
			H(r, r \tan\alpha)&=\int_{r \tan\alpha}^{+\infty} e^{-\frac{(r^2+t^2)^{p/2}}{p}} d t,\\
			\frac{\partial H}{\partial s}\Big|_{(r,r \tan \alpha)}&=-e^{-\frac{(r^2+(r \tan \alpha)^2)^{p/2}}{p}}=-e^{-\frac{r^p}{p \cos^p \alpha}},\\
			\frac{\partial^2 H}{\partial s^2}\Big|_{(r,r \tan \alpha)}&=\frac{d}{ds} \Big(-e^{-\frac{(r^2+s^2)^{p/2}}{p}}\Big)\Big|_{(r,r \tan \alpha)}=\frac{\sin \alpha r^{p-1}}{\cos^{p-1} \alpha} e^{-\frac{r^p}{p \cos^p \alpha}},
		\end{aligned}
		$$
		it follows that
		\begin{equation*}
			H(r,r \tan \alpha-\varepsilon)=H(r,r \tan \alpha)+e^{-\frac{r^p}{p \cos^p \alpha}} \varepsilon+\frac{1}{2} \frac{\sin \alpha}{\cos^{p-1} \alpha} r^{p-1} e^{-\frac{r^p}{p \cos^p \alpha}} \varepsilon^2+o(\varepsilon^2).
		\end{equation*}
        
		Let
		\begin{equation*}
			\begin{aligned}
				I_0&=\int_0^{+\infty} r^{n-2} H(r, r \tan \alpha) d r,\\
				I_1&=\int_0^{+\infty} r^{n-2} e^{-\frac{r^p}{p \cos^p \alpha}} d r =\cos^{n-1}\alpha \int_0^{+\infty} t^{n-2} e^{-\frac{t^p}{p}} d t,\\
				I_2&=\int_0^{+\infty} r^{n+p-3} \frac{\sin \alpha}{\cos^{p-1} \alpha} e^{-\frac{r^p}{p \cos^p \alpha}} d r = \sin\alpha \cos^{n-1}\alpha\int_0^{+\infty} t^{n+p-3} e^{-\frac{t^p}{p}} d t.
			\end{aligned}
		\end{equation*}
        
		By  Step 2,  we have
		\begin{equation*}
			\begin{aligned}
				\mu(A)&=(n-1)\omega_{n-1}I_0:=M,\\
				\mu(B)&=M(1+\frac{I_1}{I_0} \varepsilon+ \frac{I_2}{2I_0} \varepsilon^2+o(\varepsilon^2)),\\
				\mu(\lambda A+(1-\lambda) B)&=M(1+(1-\lambda)\frac{I_1}{I_0} \varepsilon+(1-\lambda)^2\frac{I_2}{2I_0} \varepsilon^2+o(\varepsilon^2)).
			\end{aligned}
		\end{equation*}
		
		For given $q > 0$, since 
		\begin{equation*}
			\begin{aligned}
				\mu(B)^q&=M^q(1+ q\frac{I_1}{I_0} \varepsilon+\frac{q}{2}( \frac{I_2}{I_0}+ (q-1)\frac{I_1^2}{I_0^2}) \varepsilon^2+o(\varepsilon^2)),\\
				\mu(\lambda A+(1-\lambda) B)^q&=M^q(1+ q(1-\lambda)\frac{I_1}{I_0} \varepsilon+\frac{q}{2}( \frac{I_2}{I_0}+ (q-1)\frac{I_1^2}{I_0^2})(1-\lambda)^2 \varepsilon^2+o(\varepsilon^2)),
			\end{aligned}
		\end{equation*}
		it follows that
		$$
		\mu(\lambda A+(1-\lambda) B)^q-\lambda\mu(A)^q-(1-\lambda)\mu(B)^q= -\frac{q}{2}M^q ( \frac{I_2}{I_0}+ (q-1)\frac{I_1^2}{I_0^2}) \lambda(1-\lambda) \varepsilon^2+o(\varepsilon^2).
		$$
		
		\textbf{Step 4.} Analyze the sign of the  coefficients of $\varepsilon^2$.
		
		By the coarea formula, we have
		$$
		\begin{aligned}
			I_0&=\frac{\mu(A)}{(n-1)\omega_{n-1}}=\frac{1}{(n-1)\omega_{n-1}}\int_A e^{-\frac{|x|^p}{p}}dx=  \frac{1}{(n-1)\omega_{n-1}}\int_{0}^{+\infty}\int_{\partial B(r) \cap A} e^{-\frac{r^p}{p}} d \mathcal{H}^{n-1}  dr\\
            &=\int_{0}^{+\infty}\int_0^{\frac{\pi}{2}-\alpha} e^{-\frac{r^p}{p}} \rho^{n-1}\sin ^{n-2} \theta d \theta dr  =c_0\int_0^{\frac{\pi}{2}-\alpha} \sin ^{n-2} \theta d \theta,
            \end{aligned}
		$$
        where $c_0= \int_0^{+\infty} t^{n-1} e^{-\frac{t^p}{p}} dt=p^{\frac{n}{p}-1} \Gamma\big(\frac{n}{p}\big)$.
        
        By Step 3, we have
        $I_1=c_1 \cos^{n-1}\alpha, $ where $c_1=\int_0^{+\infty} t^{n-2} e^{-\frac{t^p}{p}} d t= p^{\frac{n-1}{p}-1} \Gamma(\frac{n-1}{p})$; and $I_2= c_2\sin\alpha \cos^{n-1}\alpha,$ where $c_2 = \int_0^{+\infty} t^{n+p-3} e^{-\frac{t^p}{p}} d t=p^{\frac{n+p-2}{p}-1} \Gamma(\frac{n+p-2}{p})$.
		
		Let $\beta=\frac{\pi}{2}-\alpha$. If $\alpha\to \frac{\pi}{2}^-$, i.e., $\beta\to 0^+$, then
		$$
		\begin{aligned}
        \int_0^{\frac{\pi}{2}-\alpha} \sin ^{n-2} \theta d \theta&=\int_0^{\beta} \theta^{n-2}(1+o(1)) d\theta=\frac{\beta^{n-1}}{n-1}+o(\beta^{n-1}),\\
			\cos^{n-1} \alpha&=\sin^{n-1} \beta=\beta^{n-1}+o(\beta^{n-1}),\\
			\sin\alpha \cos^{n-1} \alpha&=\cos\beta \sin^{n-1} \beta=\beta^{n-1}+o(\beta^{n-1}).
		\end{aligned}
		$$
		So $\frac{I_1}{I_0} = (n-1)\frac{c_1}{c_0} + o(1)$ and $\frac{I_2}{I_0} = (n-1)\frac{c_2}{c_0} + o(1)$, as $\alpha\to \frac{\pi}{2}^-$. Hence
		$$
		\frac{I_2}{I_0} + (q-1)\frac{I_1^2}{I_0^2} = (n-1)\frac{c_2}{c_0} + (q-1)(n-1)^2\frac{c_1^2}{c_0^2} + o(1), \quad \text{as}\ \alpha\to \frac{\pi}{2}^-.
		$$
        
		Assume 	$(n-1)\frac{c_2}{c_0} + (q-1)(n-1)^2\frac{c_1^2}{c_0^2}>0,$ i.e., $q > 1 - \frac{c_0c_2}{(n-1) c_1^2}.$ Since $q>0$, for all $\lambda\in (0,1)$ we have $$-\frac{q}{2}M^q ( \frac{I_2}{I_0}+ (q-1)\frac{I_1^2}{I_0^2}) \lambda(1-\lambda)<0,\quad \text{as}\ \alpha\to\frac{\pi}{2}^-.$$  
        Let $\lambda=\frac{1}{2}$, we have $$\mu(\frac{1}{2} A+\frac{1}{2} B)^q<\frac{1}{2}\mu(A)^q+\frac{1}{2}\mu(B)^q,\quad  \text{as}\ \varepsilon\to 0\ \text{and}\ \alpha\to\frac{\pi}{2}^-.$$ 
        
        Let $A_R=A\,\cap B^n(R)$ and $B_R=B\, \cap B^n(R)$. Then, $$\lim_{R \to +\infty}\mu(A_R)=\mu(A), \ \lim_{R \to +\infty}\mu(B_R)=\mu(B),\ \text{and}\ \lim_{R \to +\infty}\mu(\frac{1}{2}A_R+\frac{1}{2}B_R)=\mu(\frac{1}{2}A+\frac{1}{2}B).$$ 
		Thus
		$$
		\mu(\frac{1}{2} A_R+\frac{1}{2} B_R)^q<\frac{1}{2}\mu(A_R)^q+\frac{1}{2}\mu(B_R)^q, \quad \text{as} \  \varepsilon\to 0,\ \alpha\to\frac{\pi}{2}^-\  \text{and}\ R\to +\infty,
		$$
		which suggests that
		\begin{equation*}
			\alpha_p(n)\le 1 - \frac{c_0c_2}{(n-1) c_1^2}=1 - \frac{p}{n-1} \frac{\Gamma\big(\frac{n}{p}\big) \Gamma\big(\frac{n+p-2}{p}\big)}{\Gamma\big(\frac{n-1}{p}\big)^2}.
		\end{equation*}
		This completes the proof.
	\end{proof}
	
	In particular, if $p=1$, $\mu_1$ is the measure with its potential $V(x)=|x|$, which is  even and convex. Our upper bound implies that $\alpha_1(n)\le 0$, which suggests that there does not exist a \emph{positive} number $\alpha$ such that the inequality $$\mu_1(\lambda K+(1-\lambda) L)^{\alpha} \geq \lambda \mu_1(K)^{\alpha}+(1-\lambda) \mu_1(L)^{\alpha}$$ holds for all convex bodies $K,L$ in $\mathbb{R}^n$ containing the origin and all $\lambda\in[0,1]$. This exhibits a quite different phenomenon from the case on the class of origin-symmetric convex bodies.
		\vskip 20pt
	
	\section{\bf Asymptotic for the bounds}
    	\vskip 5pt
	
\subsection{Asymptotic for the bounds as \texorpdfstring{$n\to +\infty$}{n -> +infinity}}\label{sub4.1}
\ 
    
	Fix $p\ge 1$. In this part, we give the asymptotic for the bounds in Theorem \ref{main} as $n\to +\infty$, and show that the bounds in Theorem \ref{main} are asymptotically optimal.
	
	\textbf{(1) Asymptotic for the lower bound.}
	
    From Theorem \ref{main}, let
\begin{equation*}
    f_1(n) = \frac{1}{n} e^{\frac{(p-1)n}{p}} ( \frac{(p-1)n}{p} )^{\frac{n}{p}} \Gamma(1-\frac{n}{p}, \frac{(p-1)n}{p}),\quad p\ge 1,\ n=2,3,\ldots .
\end{equation*}
where
\begin{equation*}
    \Gamma(1-\frac{n}{p}, \frac{(p-1)n}{p}) = \int_{\frac{(p-1)n}{p}}^{+\infty} t^{-\frac{n}{p}} e^{-t} dt.
\end{equation*}

Let $t = \frac{(p-1)n}{p} + s$, then
\begin{align}\label{gamma}
    \Gamma(1-\frac{n}{p}, \frac{(p-1)n}{p}) = (\frac{(p-1)n}{p})^{-\frac{n}{p}} e^{-\frac{(p-1)n}{p}} \int_0^{+\infty} (1 + \frac{ps}{(p-1)n})^{-\frac{n}{p}} e^{-s} ds.
\end{align}
Thus,
\begin{equation*}
    f_1(n) = \frac{1}{n} \int_0^{+\infty} (1 + \frac{ps}{(p-1)n})^{-\frac{n}{p}} e^{-s} ds.
\end{equation*}

Since
\begin{align*}
    -\frac{n}{p} \ln(1 + \frac{ps}{(p-1)n}) - s &= -\frac{n}{p} ( \frac{ps}{(p-1)n} - \frac{p^2 s^2}{2(p-1)^2 n^2} + O(n^{-3}) ) - s \\
    &= -\frac{p}{p-1} s + \frac{p s^2}{2(p-1)^2 n} + O(n^{-2}),
\end{align*}
we have
\begin{equation*}
    (1 + \frac{ps}{(p-1)n})^{-\frac{n}{p}} e^{-s} = e^{-\frac{p}{p-1}s} ( 1 + \frac{p s^2}{2(p-1)^2 n} + O(n^{-2})).
\end{equation*}
Consequently,
\begin{equation*}
    \begin{aligned}
        f_1(n) &= \frac{1}{n} \int_0^{+\infty} (1 + \frac{ps}{(p-1)n})^{-\frac{n}{p}} e^{-s} ds \\
        &= \frac{1}{n} \int_0^{+\infty} e^{-\frac{p}{p-1}s} ds + \frac{1}{n} \cdot \frac{p}{2(p-1)^2 n} \int_0^{+\infty} s^2 e^{-\frac{p}{p-1}s} ds + O(n^{-3}) \\
        &= \frac{p-1}{pn} + \frac{p-1}{n^2 p^2} + O(n^{-3}) = \frac{p-1}{pn} + O(n^{-2}).
    \end{aligned}
\end{equation*}

\textbf{(2) Asymptotic for the upper bound.}

    From Theorem \ref{main}, let
\begin{equation*}
    f_2(n) = 1 - \frac{p}{n-1} \frac{\Gamma(\frac{n}{p}) \Gamma(\frac{n+p-2}{p})}{\Gamma(\frac{n-1}{p})^2},\quad p\ge 1,\ n=2,3,\ldots .
\end{equation*}

By the asymptotic expansion of Gamma function (see, e.g., \cite[6.1.40]{handbook})
\begin{equation}\label{spanG}
    \ln \Gamma(x) =(x-\tfrac{1}{2}) \ln x-x+\frac{1}{2} \ln (2 \pi)+\sum_{m=1}^{+\infty} \frac{B_{2 m}}{2 m(2 m-1) x^{2 m-1}}, \quad \text{as}\ x\to +\infty,
\end{equation}
where $B_{2m}$ are the Bernoulli numbers. Especially, $B_2=\frac{1}{6}$. So we have 
\begin{equation*}
    \begin{aligned}
        \ln \Gamma(\frac{n}{p}) - \ln \Gamma(\frac{n-1}{p}) &= \frac{1}{p}\ln(\frac{n}{p}) - \frac{p(p+1)}{2p^2n} + O(n^{-2}),\\
        \ln \Gamma(\frac{n+p-2}{p}) - \ln \Gamma(\frac{n-1}{p}) &= \frac{p-1}{p}\ln(\frac{n}{p}) - \frac{3p(p-1)}{2p^2n} + O(n^{-2}).
    \end{aligned}
\end{equation*}
Hence,
\begin{equation*}
    \begin{aligned}
        \ln \frac{\Gamma(\frac{n}{p}) \Gamma(\frac{n+p-2}{p})}{\Gamma(\frac{n-1}{p})^2}
        &= \big( \ln \Gamma(\frac{n}{p}) - \ln \Gamma(\frac{n-1}{p}) \bigr) + \bigl( \ln \Gamma(\frac{n+p-2}{p}) - \ln \Gamma(\frac{n-1}{p}) \big) \\
        &= \ln(\frac{n}{p}) + \frac{p(1-2p)}{p^2 n} + O(n^{-2}).
    \end{aligned}
\end{equation*}
Therefore,
\begin{equation*}
    \frac{\Gamma(\frac{n}{p}) \Gamma(\frac{n+p-2}{p})}{\Gamma(\frac{n-1}{p})^2} = \frac{n}{p} \exp\bigl( \frac{p(1-2p)}{p^2n} + O(n^{-2}) \bigr) = \frac{n}{p} + \frac{1-2p}{p^2} + O(n^{-1}).
\end{equation*}
Consequently,
\begin{equation*}
    \begin{aligned}
        f_2(n) &= 1 - \frac{p}{n-1} \bigl( \frac{n}{p} + \frac{1-2p}{p^2} + O(n^{-1}) \bigr)= \frac{p-1}{pn} + O(n^{-2}).
    \end{aligned}
\end{equation*}

	Combining the asymptotic of $f_1(n)$ and $f_2(n)$, we have that
	$f_1(n)$ and $f_2(n)$ are asymptotically equivalent as $n \to +\infty$, sharing the exact same leading-order term $\frac{p-1}{pn}$. Thus the estimate in Theorem \ref{main} is asymptotically optimal with respect to \(n\).
	
	\subsection{Asymptotic for the bounds as  \texorpdfstring{$p\to +\infty$}{p -> +infinity}}\label{sub4.2}
	\
    
Fix $n\ge 2$. In this part, we give the asymptotic of  the  bounds in Theorem \ref{main} as $p\to +\infty$.
	
	\textbf{(1) Asymptotic for the  lower bound.}
	
From Theorem \ref{main}, let
	\begin{equation*}
		g_1(p) = \frac{1}{n} e^{\frac{(p-1)n}{p}} ( \frac{(p-1)n}{p} )^{\frac{n}{p}} \Gamma(1-\frac{n}{p}, \frac{(p-1)n}{p}),\quad p\ge 1,\ n=2,3,\ldots .
	\end{equation*}
	By \eqref{gamma}, we have
	$$
	\Gamma(1-\frac{n}{p}, \frac{(p-1)n}{p}) = e^{-\frac{(p-1)n}{p}} (\frac{(p-1)n}{p})^{-\frac{n}{p}} \int_0^{+\infty} (1 + \frac{ps}{n(p-1)})^{-\frac{n}{p}} e^{-s} ds.
	$$
	Thus,
	\begin{equation*}
		g_1(p) = (\frac{(p-1)n}{p})^{-\frac{n}{p}} \int_0^{+\infty} (1 + \frac{ps}{n(p-1)})^{-\frac{n}{p}} e^{-s} ds.
	\end{equation*}
    
	Since
	\begin{equation*}
		\frac{ps}{n(p-1)} = \frac{s}{n} + O(p^{-1}),
	\end{equation*}
	we have 
	\begin{equation*}
		(1 + \frac{ps}{n(p-1)})^{-\frac{n}{p}}= 1 + O(p^{-1}).
	\end{equation*}
	So
	\begin{align*}
		g_1(p)=\frac{1}{n}\int_0^{+\infty} \big(1 +O(p^{-1}) \big)e^{-s}  ds=\frac{1}{n}+O(p^{-1}).
	\end{align*}

    \textbf{(2) Asymptotic for the upper bound.}

    From Theorem \ref{main}, let
\begin{equation*}
    g_2(p) = 1 - \frac{p}{n-1} \frac{\Gamma(\frac{n}{p}) \Gamma(\frac{n+p-2}{p})}{\Gamma(\frac{n-1}{p})^2},\quad p\ge 1,\ n=2,3,\ldots .
\end{equation*}

By the Euler infinite product (see, e.g., \cite[6.1.40]{handbook}), we have
\begin{equation*}
    \begin{aligned}
        \ln \Gamma(x) = -\ln x - \gamma z + O(x^2)\ \   \text{and}\ \
        \ln \Gamma(1+x) = -\gamma x + O(x^2),
    \end{aligned}
\end{equation*}
where $\gamma$ is the Euler constant. Thus, 
\begin{align*}
    \ln \Gamma(\frac{n}{p}) &= -\ln(\frac{n}{p}) - \gamma \frac{n}{p} + O(p^{-2}), \\
    \ln \Gamma(1+\frac{n-2}{p}) &= -\gamma \frac{n-2}{p} + O(p^{-2}), \\
    -2 \ln \Gamma(\frac{n-1}{p}) &= 2\ln(\frac{n-1}{p}) + 2\gamma \frac{n-1}{p} + O(p^{-2}).
\end{align*}
So
\begin{align*}
    \ln \frac{\Gamma(\frac{n}{p}) \Gamma(1+\frac{n-2}{p})}{\Gamma(\frac{n-1}{p})^2}
    &= -\ln(\frac{n}{p}) + 2\ln(\frac{n-1}{p}) + ( -\gamma \frac{n}{p} - \gamma \frac{n-2}{p} + 2\gamma \frac{n-1}{p}) + O(p^{-2}) \\
    &= \ln( \frac{(n-1)^2}{n p} ) + O(p^{-2}).
\end{align*}
Thus
\begin{equation*}
    \frac{\Gamma(\frac{n}{p}) \Gamma(1+\frac{n-2}{p})}{\Gamma(\frac{n-1}{p})^2} = \frac{(n-1)^2}{n p} \big( 1 + O(p^{-2}) \big).
\end{equation*}
That is,
\begin{equation*}
    \frac{\Gamma(\frac{n}{p}) \Gamma(\frac{n+p-2}{p})}{\Gamma(\frac{n-1}{p})^2} = \frac{(n-1)^2}{n p}  + O(p^{-3}) .
\end{equation*}
Therefore,
\begin{align*}
    g_2(p) &= 1 - \frac{p}{n-1} \big( \frac{(n-1)^2}{n p}  + O(p^{-3})  \big) = \frac{1}{n} + O(p^{-2}).
\end{align*}

    It is interesting that both the bounds in Theorem \ref{main} go to $\frac{1}{n}$, and therefore recovers the classical Brunn-Minkowski inequality.

    Indeed, if $K$ and $L$ are two convex bodies containing in $B^n$, then for every $\lambda\in [0,1]$,  $\lambda K+(1-\lambda)L\subset B^n$. Therefore, we have
    $$
    \lim_{p\to +\infty}\frac{\mu_p(K)}{\mu_p(\lambda K+(1-\lambda)L)}=\lim_{p\to +\infty}\frac{\int_K e^{-\frac{|x|^p}{p}}dx}{\int_{\lambda K+(1-\lambda)L} e^{-\frac{|x|^p}{p}}dx}=\frac{V_n(K)}{V_n(\lambda K+(1-\lambda)L)}.
    $$
    Similarly,
    $$
    \lim_{p\to +\infty}\frac{\mu_p(L)}{\mu_p(\lambda K+(1-\lambda)L)}=\frac{V_n(L)}{V_n(\lambda K+(1-\lambda)L)}.
    $$
    If in addition that $K$ and $L$ contains the origin, then
    $$
    1\ge \lambda \big(\frac{\mu_p(K)}{\mu_p(\lambda K+(1-\lambda)L)}\big)^{\alpha_p(n)} +(1-\lambda) \big(\frac{\mu_p(L)}{\mu_p(\lambda K+(1-\lambda)L)}\big)^{\alpha_p(n)}.
    $$
    
    Taking $p\to +\infty$, by the asymptotic for  $\alpha_p(n)$ obtained in this subsection, we have
    $$
    1 \ge \lambda \big( \frac{V_n(K)}{V_n(\lambda K+(1-\lambda)L)} \big)^{\frac{1}{n}} + (1-\lambda) \big( \frac{V_n(L)}{V_n(\lambda K+(1-\lambda)L)}\big)^{\frac{1}{n}}.
    $$
    That is
	$$
	V_n(\lambda K+(1-\lambda) L)^\frac{1}{n} \geq \lambda V_n(K)^\frac{1}{n}+ (1-\lambda) V_n(L)^\frac{1}{n}.
	$$
    Since the Lebesgue measure is affine covariant, this inequality is also valid for general convex bodies $K$ and $L$ in $\mathbb{R}^n$.

	\vskip 20pt
    
	\section{\bf Conclusion Remarks}\label{sec5}

	\vskip 5pt
    
		\subsection{A lower bound for log-concave measures with a \texorpdfstring{$p$}{p}-homogeneous potential} \label{sub5.1}
	\ 

   It is interesting that the proof of lower bound in Theorem \ref{main} can be generalized to   log-concave measures with  $p$-homogeneous potential $V$.
	
	\begin{theorem}\label{qici}
		If $\mu$ is a finite Borel measure on $\mathbb{R}^n$ with density $e^{-V}$, where $V: \mathbb{R}^n \rightarrow[0, +\infty)$ is a $p$-homogeneous convex function for $p\in [1,+\infty)$, and $\alpha$ is the largest number such that 
		$$
		\mu(\lambda K+(1-\lambda) L)^{\alpha} \geq \lambda \mu(K)^{\alpha}+(1-\lambda) \mu(L)^{\alpha}
		$$ holds for all convex bodies $K,L$ in $\mathbb{R}^n$ containing the origin and all $\lambda\in[0,1]$, then
		$$
		\alpha\ge \frac{1}{n}e^{\frac{(p-1)n}{p}} ( \frac{(p-1)n}{p} )^{\frac{n}{p}} \Gamma(1-\frac{n}{p}, \frac{(p-1)n}{p}).
		$$
	\end{theorem}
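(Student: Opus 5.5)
Theorem \ref{qici} should follow the same route as the lower bound in Theorem \ref{main}: Lemma \ref{gen}, then Lemma \ref{dandiao}, then the Gamma computation of Step 2. The only new ingredient is a pointwise bound on $\langle(\nabla^2V)^{-1}\nabla V,\nabla V\rangle$ using homogeneity alone. Treat $p>1$ first and recover $p=1$ by the limiting argument of Step 3.

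\textbf{Key pointwise bound.} Let $V$ be $p$-homogeneous and convex, and twice differentiable at $x\neq 0$ with $\nabla^2V(x)$ positive definite. Differentiating $V(\lambda x)=\lambda^pV(x)$ gives Euler's identities
\[
\langle\nabla V(x),x\rangle=pV(x),\qquad \nabla^2V(x)\,x=(p-1)\nabla V(x).
\]
Hence $(\nabla^2V)^{-1}\nabla V=\frac{x}{p-1}$, and therefore
\[
\langle(\nabla^2V)^{-1}\nabla V,\nabla V\rangle=\frac{\langle x,\nabla V\rangle}{p-1}=\frac{p}{p-1}V(x).
\]
For $V=|x|^p/p$ this reproduces $\frac{|x|^p}{p-1}$. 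The integrand in Lemma \ref{gen} thus becomes $F(x)=\frac{n}{n+\frac{p}{p-1}V(x)}$. This $F$ is continuous and positive, and it is decreasing along rays because $V(r\theta)=r^pV(\theta)$ with $V\ge 0$.

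\textbf{Regularity obstacle.} Lemma \ref{gen} requires $\nabla^2V$ positive definite almost everywhere, which a general $p$-homogeneous convex $V$ need not satisfy; $V$ could be flat in some directions or fail to be $C^2$. I would regularize by putting $V_\varepsilon=V+\varepsilon|x|^p/p$, which is still $p$-homogeneous and convex. Where $V$ is twice differentiable, $\nabla^2V_\varepsilon\ge \varepsilon\nabla^2(|x|^p/p)>0$ for $x\ne0$, and by Alexandrov's theorem that holds almost everywhere. If the Hessian of $V$ is only defined almost everywhere, I would additionally mollify while preserving homogeneity, for example by averaging over rotations or smoothing the restriction to the sphere and extending homogeneously, keeping convexity. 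The identity above then applies to $V_\varepsilon$. At the end, let $\varepsilon\to0$: $\mu_\varepsilon(A)\to\mu(A)$ by dominated convergence, since $e^{-V_\varepsilon}\le e^{-V}$ and $\mu$ is finite, and the Brunn--Minkowski inequality with a fixed exponent passes to the limit. I expect this approximation step to be the main technical obstacle, namely justifying the use of Theorem \ref{KL} for such $V$. The key computation itself is routine.

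\textbf{Computing the infimum.} By Lemma \ref{dandiao} and $V(r\theta)=r^pV(\theta)$, for each $\theta$ the ratio is
\[
\frac{\int_0^\infty \frac{n}{n+\frac{p}{p-1}r^pV(\theta)}\,r^{n-1}e^{-r^pV(\theta)}\,dr}{\int_0^\infty r^{n-1}e^{-r^pV(\theta)}\,dr}.
\]
Finiteness of $\mu$ forces $V(\theta)>0$ for every $\theta$, since otherwise $\mu$ would be infinite along a cone. Substituting $u=r^pV(\theta)$ removes the dependence on $\theta$ completely, and the ratio becomes
\[
\frac{n}{\Gamma(\frac np)}\int_0^\infty\frac{e^{-u}u^{\frac np-1}}{n+\frac{p}{p-1}u}\,du.
\]
This is exactly the expression evaluated in Step 2, and it equals $e^{\frac{(p-1)n}{p}}\big(\frac{(p-1)n}{p}\big)^{\frac np}\Gamma\big(1-\frac np,\frac{(p-1)n}{p}\big)$. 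Dividing by $n$ gives the claimed bound for $p>1$.

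\textbf{The case $p=1$.} For $p=1$ the claimed bound is $0$, which is just log-concavity of $\mu$, i.e.\ Prékopa's theorem, since $V$ is convex. So no limiting argument is actually needed there.
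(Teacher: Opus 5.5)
Your proposal is correct and, for $p>1$, is essentially the paper's proof: Euler's identities give $\langle(\nabla^2V)^{-1}\nabla V,\nabla V\rangle=\frac{p}{p-1}V$, one regularizes with $V+\varepsilon|x|^p$, applies Lemmas \ref{gen} and \ref{dandiao}, and the substitution $u=r^pV(\theta)$ makes the ratio independent of $\theta$ and reduces it to the Gamma computation of Step 2. Your extra mollification is not needed, since Theorem \ref{KL} is taken for arbitrary convex $V$ and Lemma \ref{gen} only uses a.e.\ positive definiteness. The one genuine difference is at $p=1$, where you invoke Pr\'ekopa's theorem directly instead of the paper's limit $p\to1^+$. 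Your route is preferable there, because the paper's limiting argument tacitly applies the $p>1$ result to the potentials $r^pV(\theta)=|x|^{p-1}V(x)$, and these need not be convex (e.g.\ $V(x)=|x_1|+\eta|x_2|$ with $\eta$ small).
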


	\begin{proof}
		It suffices to prove the theorem for $p>1$. Indeed,  since $V$ is a $p$-homogeneous convex function,
        it follows that $\lim_{p\to 1^+}e^{-r^pV(\theta)}=e^{-rV(\theta)}$ and $e^{-rV(\theta)}\le 1$
        for all $r\ge 0$ and $\theta\in \mathbb{S}^{n-1}$.  Hence,  $\lim_{p\to 1^+}\int_{A}e^{-r^pV(\theta)}drd\theta=\int_{A}e^{-rV(\theta)}drd\theta$ for any convex body $A$ in $\mathbb{R}^n$. Similar to Step 3 in  proof of the lower bound in Theorem \ref{main}, we extend the result for $p>1$ to the case $p=1$.
		
		\textbf{Step 1.} Reduce the proof of this Theorem to the case that $\nabla^2V$ is positive definite.
		
	Since $g(x)=|x|^p$ is $p$-homogeneous and $\nabla^2g$ is positive definite for all $x\neq 0$, it follows that  $V+\varepsilon g$ is $p$-homogeneous and  $\nabla^2(V+\varepsilon g)$ is positive definite for all $\varepsilon>0$ almost everywhere. Moreover, $e^{-(V+\varepsilon g)}$ converges to $e^{-V}$ pointwise and is bounded by $1$ on $\mathbb{R}^n$ as $\varepsilon\to 0^+$.
		Let $\mu_\varepsilon$ be the measure with density $e^{-(V+\varepsilon g)}$. Then $\lim_{\varepsilon\to 0^+}\mu_\varepsilon(K)=\mu(K)$ for all $K\in\mathcal{K}^n_o$. Hence, it suffices to prove the theorem for the regularized potential $V+\varepsilon g$.
		
		\textbf{Step 2.} Express the lower bound of $\alpha$.
		
		Assume $V$ is  $p$-homogeneous and $\nabla^2V$ is positive definite almost everywhere. Then we have 
		\begin{equation*}
			\langle\left(\nabla^2 V\right)^{-1} \nabla V, \nabla V\rangle=\frac{p}{p-1} V,
		\end{equation*}
        which is explicitly appeared in \cite[Lemma 3.7]{rotem}. 
		Indeed, by the $p$-homogeneity of $V$, we have
		$$\langle\nabla V, x\rangle=\lim _{t \rightarrow 0} \frac{V(x+t x)-V(x)}{t}=p V(x).$$  Differentiating this equation, it follows that $\nabla^2 V \cdot x= (p-1) \nabla V(x)$. So  $$p V(x)=\langle\nabla V, x\rangle= (p-1)\langle (\nabla^2 V)^{-1}\nabla V, \nabla V\rangle(x).$$ Furthermore, we obtain
		$$
		\frac{n}{n+\langle\left(\nabla^2 V\right)^{-1} \nabla V, \nabla V\rangle }= \frac{n}{n+\frac{p}{p-1}V }.
		$$
        
		In light of $p>1$ and  $V(r\theta)=r^pV(\theta)\ge 0$, it follows that $V(r\theta)$ is  increasing in $r$ for all  $\theta\in\mathbb{S}^{n-1}$. Hence, $\frac{n}{n+\frac{p}{p-1}V(r\theta) }$ is positive and decreasing in $r$ for all  $\theta\in\mathbb{S}^{n-1}$. By the  convexity of $V$, it follows that $\frac{n}{n+\frac{p}{p-1}V(r\theta) }$ is continuous. Hence, using Lemma \ref{gen} and Lemma \ref{dandiao}, we have
		$$
		\begin{aligned}
			\alpha\ge&\inf_{K \in \mathcal{K}^n_o} \frac{1}{n\mu(K)}\int_K \frac{n}{n+\langle\left(\nabla^2 V\right)^{-1} \nabla V, \nabla V\rangle }d \mu\\=&\frac{1}{n}\inf_{\theta\in \mathbb{S}^{n-1}}\{\frac{\int_{0}^{+\infty}\frac{n}{n+\frac{p}{p-1}V(r\theta)}r^{n-1}e^{-V(r\theta)}dr}{\int_{0}^{+\infty}r^{n-1}e^{-V(r\theta)}dr}\}\\
			=&\frac{1}{n}\inf_{\theta\in \mathbb{S}^{n-1}}\{\frac{\int_{0}^{+\infty}\frac{ne^{-r^pV(\theta)}}{n+\frac{p}{p-1}r^pV(\theta)}r^{n-1}dr}{\int_{0}^{+\infty}e^{-r^pV(\theta)}r^{n-1}dr}\}.
		\end{aligned}
		$$
		
		\textbf{Step 3:} Compute the lower bound of $\alpha$.
		
		Define $$g(t)=\frac{\int_0^{+\infty} \frac{n e^{-tr^p}}{n+\frac{p}{p-1} t r^p} r^{n-1} d r}{\int_0^{+\infty} e^{-tr^p} r^{n-1} d r},\quad t>0.$$
		Let $u=tr^p$, $t>0$, $r\ge 0$. Then
		$$
		\begin{aligned}
			\int_0^{+\infty} \frac{n e^{-tr^p}}{n+\frac{p}{p-1} t r^p} r^{n-1} d r=&\int_0^{+\infty} \frac{n e^{-u}}{n+\frac{p}{p-1} u} \cdot(\frac{u}{t})^{(n-1) / p} \cdot \frac{1}{p} t^{-1 / p} u^{1 / p-1} d u 
		\\=&\frac{n}{p} t^{-n / p} \int_0^{+\infty} \frac{e^{-u} u^{n / p-1}}{n+\frac{p}{p-1} u} d u
		\end{aligned}
		$$
		and
        $$\begin{aligned} \int_0^{+\infty} e^{-tr^p} r^{n-1} d r=&\int_0^{+\infty} e^{-u} \cdot(\frac{u}{t})^{(n-1) / p} \cdot \frac{1}{p} t^{-1 / p} u^{1 / p-1} d u \\  =&\frac{1}{p} t^{-n / p} \int_0^{+\infty} e^{-u} u^{n / p-1} d=\frac{1}{p} t^{-n / p} \Gamma(\frac{n}{p}).\end{aligned}$$
        
		Thus, 
		\begin{equation}\label{gt}
			g(t)=\frac{\frac{n}{p} t^{-n / p} \int_0^{+\infty} \frac{e^{-u} u^{n / p-1}}{n+\frac{p}{p-1} u} d u}{\frac{1}{p} t^{-n / p} \Gamma(\frac{n}{p})}=\frac{n}{\Gamma(\frac{n}{p})} \int_0^{+\infty} \frac{e^{-u} u^{\frac{n}{p}-1}}{n+\frac{p}{p-1} u} d u
		\end{equation}
		 is \emph{independent} of $t$. Similar to Step 2 in the proof of  lower bound in Theorem \ref{main}, we have  
		$$
		g(t)\equiv\frac{1}{n}e^{\frac{(p-1)n}{p}} ( \frac{(p-1)n}{p} )^{\frac{n}{p}} \Gamma(1-\frac{n}{p}, \frac{(p-1)n}{p}), \quad t>0.
		$$
        
		Consequently, 
		$$
		\begin{aligned}
			\alpha\ge&\frac{1}{n}\inf_{\theta\in \mathbb{S}^{n-1}}\{g(V(\theta))\}=\frac{1}{n}e^{\frac{(p-1)n}{p}} ( \frac{(p-1)n}{p} )^{\frac{n}{p}} \Gamma(1-\frac{n}{p}, \frac{(p-1)n}{p}).
		\end{aligned}
		$$
		This completes the proof.
	\end{proof}
	
	\begin{remark}\label{re2}
     In Theorem 1.3 of Aishwarya and Rotem \cite{rotem}, they showed  the exponent $\alpha\ge \frac{p-1}{pn}$ on star bodies in $\mathbb{R}^n$. It is naturally that $\alpha\ge \frac{p-1}{pn}$ on  convex bodies containing the origin in $\mathbb{R}^n$. 
     
     It is interesting that the lower bound  in Theorem \ref{qici} satisfies that
		$$
		\frac{1}{n}g(t)=\frac{1}{\Gamma(\frac{n}{p})} \int_0^{+\infty} \frac{e^{-u} u^{\frac{n}{p}-1}}{n+\frac{p}{p-1} u} d u>\frac{p-1}{pn}.
		$$
        
        Indeed, let $f(u)=\frac{1}{n+\frac{p}{p-1} u}$ and $w(u)=\frac{e^{-u} u^{n / p-1}}{\Gamma(\frac{n}{p})}$, $u>0$. Then $f$ is strictly convex in $\mathbb{R}$, and $\int_{0}^{+\infty}w(u)du=1$. By the Jensen inequality, we have
		$$
		\begin{aligned}
			\frac{1}{\Gamma(\frac{n}{p})} \int_0^{+\infty} \frac{e^{-u} u^{\frac{n}{p}-1}}{n+\frac{p}{p-1} u} d u&=\int_0^{+\infty} f(u) w(u) d u>f(\int_0^{+\infty} u w(u) d u)\\
			&=f(\frac{\Gamma(\frac{n}{p}+1)}{\Gamma(\frac{n}{p})})=f(\frac{n}{p})=\frac{p-1}{pn}.
		\end{aligned}
		$$

        \end{remark}

        \subsection{Illustration of the bounds}\label{sub5.2}
\ 

        In the following, we illustrate our bounds and the known bounds by Python 3.14.0.

        For $\alpha_2(n)$, we illustrate our bounds in Theorem \ref{main}, Kolesnikov-Livshyts's \cite{Livshyts}  lower bound $\frac{1}{2n}$, and the trivial upper bound $\frac{1}{n}$ in Figure \ref{fig1}. 
        The left picture is plotted in the standard Cartesian coordinates and the right picture is plotted in the log‑log coordinates.
     \begin{figure}[ht]
  \centering
  \includegraphics[width=0.85\linewidth]{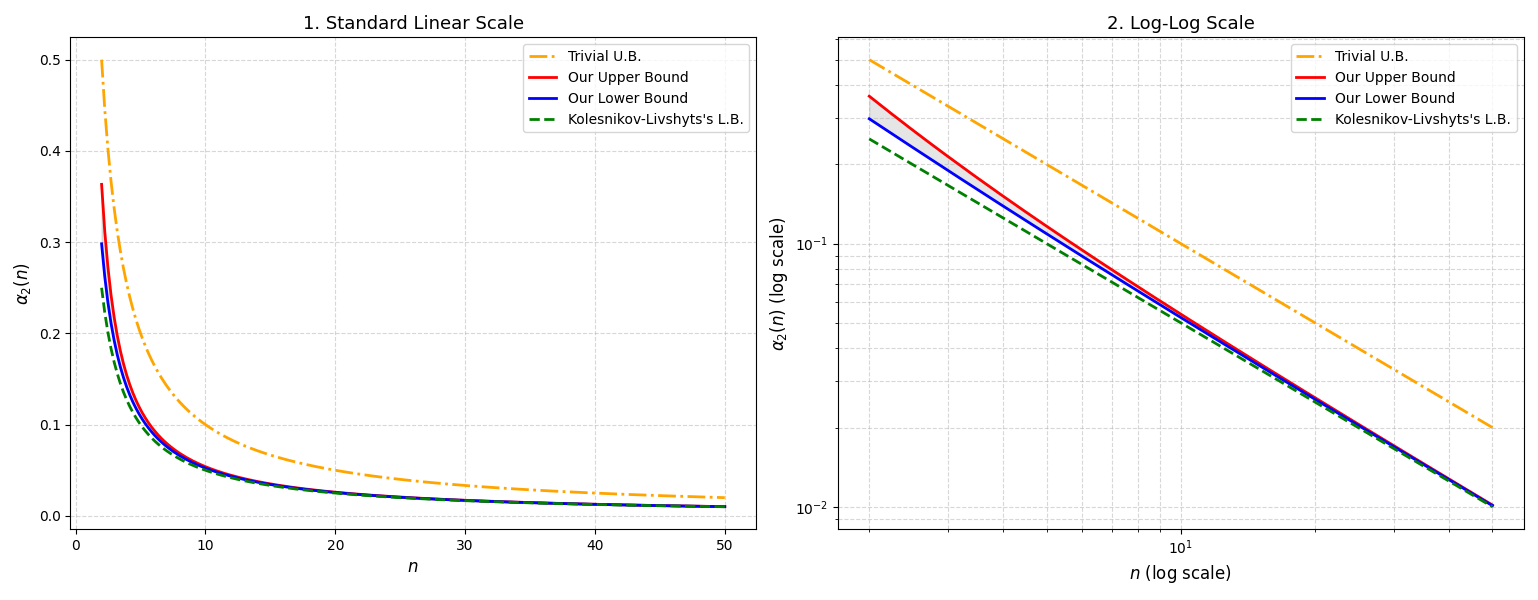}
  \caption{}
  \label{fig1}
\end{figure}

For $\alpha_p(2),\alpha_p(3)$ and $\alpha_p(10)$, we illustrate our bounds in Theorem \ref{main}, Aishwarya-Rotem's \cite{rotem} lower bound $\frac{p-1}{pn}$, and the trivial upper bound $\frac{1}{n}$ as \(p\) varies in Figure \ref{fig2}.
\begin{figure}[ht]
  \centering
  \includegraphics[width=0.9\linewidth]{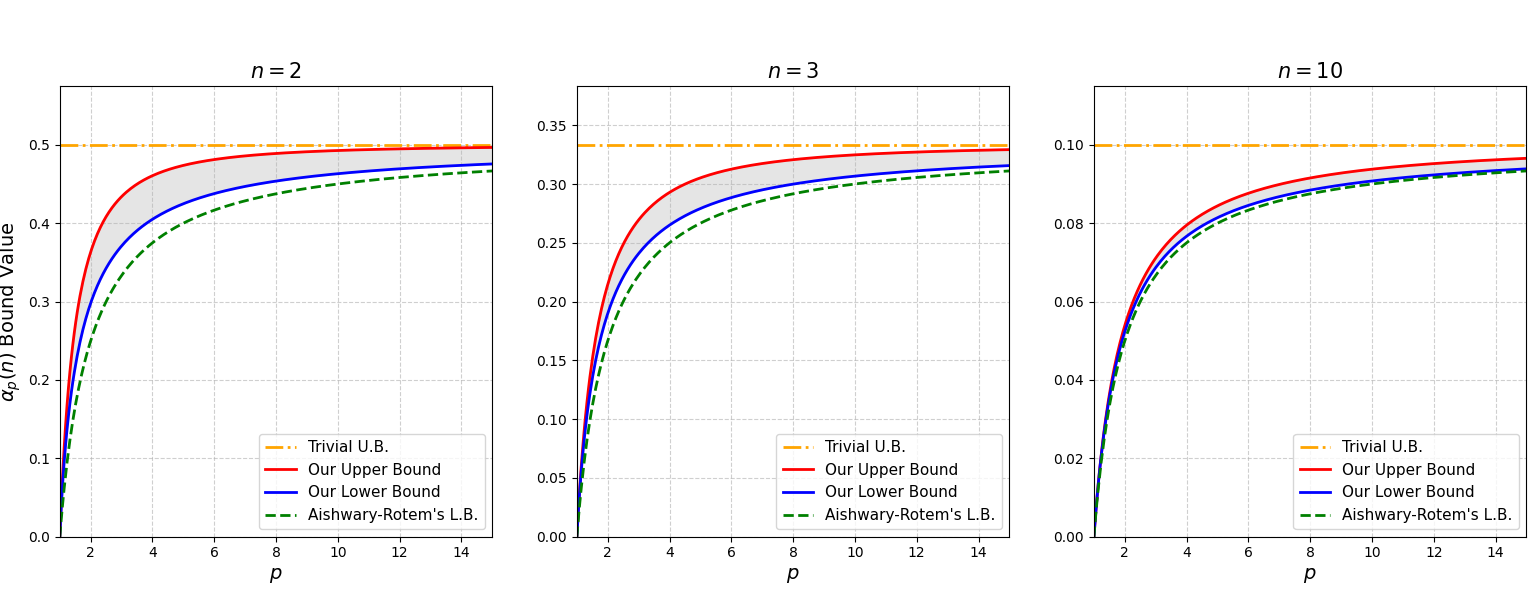}
\caption{}
  \label{fig2}
\end{figure}

The results shown in the figures are consistent with our asymptotic analysis.

	\vskip3pt {\bf Conflict of Interest}: We declare that we have no conflict of interest.
	\vskip3pt {\bf Data Availability}: Not applicable.

\end{document}